\documentclass[11pt]{article}

\usepackage{amsmath,amssymb,amsthm,amsfonts}
\usepackage{mathtools}
\usepackage{mathrsfs}
\usepackage{bm}
\usepackage{graphicx}
\usepackage{float}
\usepackage{multirow}
\usepackage{booktabs}
\usepackage{longtable}
\usepackage{arydshln}
\usepackage{enumitem}
\usepackage{cite}
\usepackage{microtype}
\usepackage[a4paper,margin=2.5cm]{geometry}
\usepackage{hyperref}

\hypersetup{
	colorlinks=true,
	linkcolor=blue,
	citecolor=blue,
	urlcolor=blue
}

\allowdisplaybreaks[2]
\numberwithin{equation}{section}

\theoremstyle{plain}
\newtheorem{theorem}{Theorem}[section]
\newtheorem{lemma}[theorem]{Lemma}

\newtheorem{corollary}[theorem]{Corollary}

\theoremstyle{definition}

\theoremstyle{remark}
\newtheorem{remark}[theorem]{Remark}

\newcommand{\C}{\mathbb C}
\newcommand{\R}{\mathbb R}

\newcommand{\Focka}{  F^2_\alpha}
\newcommand{\Fockone}{\mathcal F^2_1}
\newcommand{\Hol}{\operatorname{Hol}}
\newcommand{\Fsa}{\mathcal F_{\alpha}}
\newcommand{\Kern}{\mathcal K}
\newcommand{\TopAlg}{\mathcal T_\alpha}
\newcommand{\Pol}{\mathcal P}

\title{\Large\bf
A Fourier Criterion for the Toeplitzness of Operators on Fock Spaces
	\footnotetext{\hspace{-0.35cm}
		\endgraf{\it E-mail: zhaopeng.lin@mail.dlut.edu.cn (Zhaopeng Lin)}
		\endgraf \hspace{1.1cm} {\it lyfdlut@dlut.edu.cn (Yufeng Lu)}
		\endgraf \hspace{1.1cm} {\it zuchao@dlut.edu.cn (Chao Zu)}
		\endgraf Y. Lu was supported by the National Natural Science Foundation of China
		(Grant No. 12031002). C. Zu was supported by the National Natural Science
		Foundation of China (Grant No. 12401151), and the Postdoctoral Researcher
		Foundation of China (Grant No. GZB20240100).
}}
 
\author{Zhaopeng Lin, Yufeng Lu, Chao Zu\thanks{Corresponding author.}}
\date{}

\begin{document}
	
	\maketitle
	
	\vspace{-0.8cm}
	
	\begin{center}
		\begin{minipage}{14cm}\small
			\noindent{\bf Abstract.}\quad
We give a Fourier criterion for the Toeplitzness of bounded operators on Fock
spaces, where Toeplitzness means representability as a  Toeplitz operator
with a bounded measurable symbol.  For a Toeplitz operator, the anti-diagonal
restriction of its canonical kernel is the Fourier transform of the
Gaussian-weighted symbol.  Consequently, Fourier inversion of this anti-diagonal
restriction recovers the unique bounded symbol whenever such a representation
exists.  As applications, we characterize the Toeplitzness  of weighted composition operators and generalized
Volterra-type operators.
			\endgraf
			\medskip
			\noindent{\bf Mathematics Subject Classification (2020).}\quad
			Primary 47B35; Secondary 47B32, 30H20.
			\endgraf
		\noindent{\bf Keywords.}\quad
		Fock space, Toeplitz operator, Toeplitzness, weighted composition operator,
		Volterra--type operator, Fourier transform.
		\end{minipage}
	\end{center}

\section{Introduction}

Let \(\mathcal H\subset L^2(X,\mu)\) be a closed reproducing kernel Hilbert
space of analytic functions, and let \(P_{\mathcal H}\) denote the orthogonal
projection of \(L^2(X,\mu)\) onto \(\mathcal H\).  For a symbol \(f\) in a
prescribed function class, the Toeplitz operator with symbol \(f\) is
\[
T_f=P_{\mathcal H}M_f|_{\mathcal H},
\]
where \(M_f\) denotes multiplication by \(f\).   Toeplitz operators constitute one of
the most important model classes in the theory of non-selfadjoint operators.
Their study lies at the intersection of operator theory, function 
theory, and the theory of Banach algebras.

  On Hardy and Bergman spaces, Toeplitz operators provide a canonical mechanism for translating the   geometry and function theory of the symbol to 
  operator-theoretic properties of the induced operator, such as spectrum,
  Fredholmness, index, and invertibility. The classical Hardy-space 
  theory gives a particularly transparent example: if
  \(f\in C(\mathbb T)\), then \(T_f\) is Fredholm if and only if \(f\) has no
  zeros on \(\mathbb T\).  
Thus a topological invariant of the symbol becomes the Fredholm index of the 
operator; see \cite{BrownHalmos,MR361893,MR2223704}.   
On Fock spaces, Toeplitz operators arise naturally in classical models of  quantization.  This connects  Toeplitz 
operators with coherent states, quantum mechanics, Weyl calculus, and quantum 
harmonic analysis; see 
\cite{MR350504,MR411452,MR859136,Folland,MR2934601}.

A natural problem is then the Toeplitzness problem: given a bounded operator
\(A\) on \(\mathcal H\), when does there exist a symbol \(f\) in the prescribed class
such that
\[
A=T_f?
\]
We shall use the term  Toeplitzness for this property.

The Toeplitzness problem originates in the classical theorem of Brown and
Halmos.  On the Hardy space \(H^2(\mathbb D)\), if \(S\) is the unilateral shift,
then a bounded operator \(A\) is Toeplitz if and only if
\[
S^*AS=A.
\]
Equivalently, the matrix of \(A\) with respect to the monomial basis has constant
diagonals.  This identity gives a purely operator-theoretic criterion for the
Toeplitzness of \(A\) and has served as the model for later Toeplitzness criteria
on analytic function spaces; see \cite{BrownHalmos}.  For concrete operator
classes, Nazarov and Shapiro showed
that a composition operator \(C_\varphi\) on \(H^2(\mathbb D)\) is Toeplitz 
only in the trivial case \(\varphi(z)=z\); 
see \cite{NazarovShapiro}.  For weighted composition operators, Ohno proved 
that the only nonzero Toeplitz weighted composition operators on 
\(H^2(\mathbb D)\) are multiplication operators; see \cite{Ohno}.

On Bergman spaces the situation is subtler: the Bergman shift is a weighted
shift, so the Hardy-space identity \(S^*AS=A\) has to be replaced by identities
adapted to the weighted Bergman shift and its Cauchy dual.  Such
Brown--Halmos type criteria are known in particular for Toeplitz operators with
bounded harmonic symbols; see
\cite{MR1867348,LouhichiOlofsson,OlofssonWennman}. These results have
also been applied to concrete operators.  Manhas and Zhao
proved that weighted composition operators on weighted Bergman spaces which
are Toeplitz operators with bounded harmonic symbols are precisely
multiplication operators; see \cite{ManhasZhao}.

The Fock space has a different structure: multiplication by the coordinate function is an unbounded operator on
\(\Focka\).  Hence the classical Brown--Halmos shift identity has no literal
bounded-shift analogue in this setting.  The main purpose
of this paper is to replace such a missing shift equation by a Fourier-theoretic
Toeplitzness criterion.  More precisely, we show that the anti-diagonal restriction
of the canonical kernel of a Toeplitz operator is a Fourier transform of its
Gaussian-weighted symbol.  Thus the Toeplitzness of a bounded operator  can be tested by Fourier inversion of this
anti-diagonal restriction.
 
A closely related line of work concerns the Fock--Toeplitz algebra.  Let
\(\TopAlg\) denote the \(C^*\)-algebra generated by Toeplitz operators with
bounded measurable symbols on \(\Focka\).  Bauer, Fulsche and Rodriguez
Rodriguez studied several concrete classes of operators from this perspective,
including weighted composition operators and generalized Volterra--type
operators; see \cite{BFRR}.  Following the terminology of \cite{BFRR}, we
distinguish the membership problem \textup{(A)} from the representation problem
\textup{(B)}.  Problem \textup{(A)} asks whether a given concrete operator
belongs to \(\TopAlg\).  Problem \textup{(B)} asks whether it is itself a single
Toeplitz operator with a bounded measurable symbol and, in that case, how to
recover the symbol.  The latter problem is more rigid.

For weighted composition operators, \cite[Question~1]{BFRR} asks when a compact
operator of this type is a Toeplitz operator, and
\cite[Theorem~3.12]{BFRR} gives a sufficient condition with an explicit bounded
symbol.  Our Theorem~\ref{thm:weighted} gives the corresponding necessary and
sufficient condition, and hence settles the bounded-symbol Toeplitzness problem
for weighted composition operators.

For generalized Volterra--type operators, \cite[Theorem~3.25]{BFRR} gives a
complete answer to the membership problem \textup{(A)} in the Fock--Toeplitz
algebra.  We address here the stronger representation problem \textup{(B)} for
the same class.  Theorem~\ref{thm:volterra-main} determines exactly when such
an operator is itself a Toeplitz operator with a bounded measurable symbol, and
in that case recovers the unique bounded symbol explicitly.

Let \( F^2_\alpha\) denote the weighted Fock space associated with
\[
d\lambda_\alpha(z)=\frac{\alpha}{\pi}e^{-\alpha|z|^2}\,dA(z),
\qquad \alpha>0,
\]
where \(dA\) is Lebesgue area measure.  The reproducing kernel is $
K_w(z)$ for $w\in \mathbb{C}$. 
For a bounded operator \(A\) on \(\mathcal F^2_\alpha\), its canonical kernel is
\[
\Kern_A(w,z)=\langle AK_w,K_z\rangle_\alpha .
\]
Set
\[
H_0^A(w):=\Kern_A(w,-w),
\qquad
H_1^A(w):=\left.\partial_z\Kern_A(w,z)\right|_{z=-w}.
\]

We use the   Fourier transform
\[
(\mathcal F_{ \alpha}G)(w)
=
\int_{\mathbb C}G(u)
e^{\alpha(u\bar w-\bar u w)}\,dA(u).
\]
This is the ordinary Euclidean Fourier transform after a fixed rotation and
dilation of the frequency variable.
 
 We now state the main Fourier criterion of the paper.  It characterizes when a
 bounded operator on \(\Focka\) is a Toeplitz operator with a bounded measurable
 symbol, and, in that case, recovers the unique bounded symbol by Fourier inversion
 of the anti-diagonal restriction.

\begin{theorem}
	\label{thm:higher-recovery}
	Let $A\in\mathcal L(\Focka)$. Then the following assertions hold.
	
	\begin{enumerate}[label=\textup{(\alph*)}]
	\item 
	The operator \(A\) is of the form \(A=T_f\) with \(f\in L^\infty(\C,dA)\)
	if and only if \(H_0^A\in L^2(\C,dA)\) and the function defined a.e. by
	\begin{equation}\label{eq:recovered-symbol-zero}
		f_A^{(0)}(u)
		:=
		\frac{\pi}{\alpha}
		e^{\alpha|u|^2}
		\bigl(\Fsa^{-1}H_0^A\bigr)(u)
	\end{equation}
	belongs to \(L^\infty(\C,dA)\).  In that case,
	\[
	A=T_{f_A^{(0)}},
	\]
	and the bounded Toeplitz symbol is unique almost everywhere.
		
	\item 
	The operator \(A\) is of the form \(A=T_f\) for some
	\(f\in L^\infty(\C,dA)\) if and only if \(H_1^A\in L^2(\C,dA)\), the a.e. defined 
	function on \(\C\) given by
	\begin{equation}\label{eq:first-order-recovered-symbol}
		f_A^{(1)}(u)
		:=
		\frac{1}{\bar u}
		\frac{\pi}{\alpha^2}
		e^{\alpha|u|^2}
		\bigl(\Fsa^{-1}H_1^A\bigr)(u), 
		\qquad u\ne0.
	\end{equation}
belongs to \(L^\infty(\C,dA)\),  and 
	satisfies the origin functional identity
	\begin{equation}\label{eq:origin-functional-condition}
		Ah(0)=T_{f_A^{(1)}}h(0),
		\qquad h\in\Focka .
	\end{equation}
	In that case,
	\[
	A=T_{f_A^{(1)}},
	\]
	and the bounded Toeplitz symbol is unique almost everywhere.
	\end{enumerate}
\end{theorem}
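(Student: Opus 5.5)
Start from the canonical kernel of a Toeplitz operator $T_f$ with $f\in L^\infty$. With $K_w(z)=e^{\alpha z\bar w}$ we have
\[
\Kern_{T_f}(w,z)=\langle fK_w,K_z\rangle_\alpha=\frac{\alpha}{\pi}\int_{\C} f(u)\,e^{\alpha u\bar w}\,e^{\alpha z\bar u}\,e^{-\alpha|u|^2}\,dA(u).
\]
Setting $z=-w$ turns the exponent into $\alpha(u\bar w-\bar u w)-\alpha|u|^2$. Hence
\[
H_0^{T_f}(w)=\frac{\alpha}{\pi}\,\Fsa\bigl(fe^{-\alpha|u|^2}\bigr)(w).
\]
Since $f e^{-\alpha|\cdot|^2}\in L^1\cap L^2$, Plancherel (in the rotated and dilated form of $\Fsa$) gives $H_0^{T_f}\in L^2$, and inverting yields $f=f_A^{(0)}$ a.e. This proves necessity in (a). It also proves uniqueness: if $T_f=T_g$, the two kernels coincide, so the transforms of $fe^{-\alpha|u|^2}$ and $ge^{-\alpha|u|^2}$ coincide, and therefore $f=g$ a.e.

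For sufficiency in (a), suppose $H_0^A\in L^2$ and $f:=f_A^{(0)}\in L^\infty$. Then $T_f$ is bounded, and by the computation above $H_0^{T_f}=H_0^A$. The key step is to show that a bounded operator is determined by its anti-diagonal restriction $H_0$. The function $\Phi(w,z):=\Kern_A(w,z)-\Kern_{T_f}(w,z)$ is holomorphic in $z$ and antiholomorphic in $w$, so $(\zeta,z)\mapsto\Phi(\bar\zeta,z)$ is entire on $\C^2$. Its restriction to the totally real subspace $\{(\zeta,z):z=-\bar\zeta\}$ vanishes identically, and an entire function on $\C^2$ that vanishes on a totally real subspace of real dimension 2 vanishes everywhere. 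Concretely, expand $\Phi=\sum a_{jk}\bar w^j z^k$; on the anti-diagonal this becomes $\sum a_{jk}(-1)^k\bar w^j w^k$, and the monomials $\bar w^jw^k$ are independent, so every $a_{jk}=0$. Thus $\Kern_A=\Kern_{T_f}$. Since the kernel functions span a dense subspace, $A=T_f$.

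For (b), differentiate the kernel formula in $z$ under the integral, which is justified by the Gaussian decay. This gives
\[
H_1^{T_f}(w)=\frac{\alpha^2}{\pi}\,\Fsa\bigl(\bar u f e^{-\alpha|u|^2}\bigr)(w),
\]
so necessity, the formula for $f_A^{(1)}$, and uniqueness follow exactly as in (a). Necessity of the origin identity is trivial once $A=T_{f_A^{(1)}}$.

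Sufficiency in (b) is the main obstacle, because $H_1$ only sees $\partial_z\Phi$. If $f:=f_A^{(1)}\in L^\infty$, then $\Phi:=\Kern_A-\Kern_{T_f}$ satisfies $\partial_z\Phi(w,-w)=0$. The same coefficient argument applies to $\partial_z\Phi=\sum k\,a_{jk}\bar w^jz^{k-1}$ and gives $a_{jk}=0$ for all $k\ge1$. Hence $\Phi(w,z)=\Phi(w,0)=\overline{\langle (A-T_f)^*K_0,K_w\rangle}$, which only records the functional $h\mapsto (A-T_f)h(0)$. The identity \eqref{eq:origin-functional-condition} says precisely that $(A-T_f)^*K_0=0$, so $\Phi(w,0)=0$ for all $w$. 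Therefore $\Phi\equiv0$ and $A=T_f$. In writing this up I would take care over the a.e. definition of $f_A^{(1)}$ at $u=0$, which is harmless, and over the $L^2$ Fourier inversion conventions for $\Fsa$, in particular that $\Fsa^{-1}$ carries the constant $\alpha^2/\pi^2$.
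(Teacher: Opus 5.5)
Your proposal is correct and follows the paper's route: Fourier inversion of the anti-diagonal restriction of the Toeplitz kernel, the anti-diagonal uniqueness lemma, and, in (b), $z$-independence of $\mathcal K_A-\mathcal K_{T_f}$ followed by the origin identity; the only differences are that you prove the lemma directly by matching Taylor coefficients of $\sum a_{jk}\bar w^j(-w)^k$ where the paper cites the diagonal uniqueness theorem, and that you phrase the origin step through $(A-T_f)^*K_0$. The one step to state explicitly is that Plancherel gives $H_0^{T_f}=H_0^A$ (resp.\ $H_1^{T_f}=H_1^A$) only almost everywhere; since both sides are continuous, this equality holds everywhere, which is what your coefficient argument needs.
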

 
This anti-diagonal approach should be compared with the usual  Berezin
transform.  For a bounded symbol \(f\), the Berezin transform of \(T_f\) is the
Gaussian heat transform of \(f\):
\[
\widetilde{T_f}(z)
=
\frac{\alpha}{\pi}\int_{\C}f(u)e^{-\alpha|u-z|^2}\,dA(u).
\]
Thus the Berezin transform  of a Toeplitz operator give a necessary heat-transform
condition on the symbol.  However, recovering \(f\) from \(\widetilde{T_f}\)
requires a backward heat operation, which is an unstable inverse problem.  

By contrast, the anti-diagonal restriction turns the canonical Toeplitz kernel
directly into a Fourier transform of the Gaussian-weighted symbol.  The
bounded-symbol recognition problem is therefore reduced to Fourier inversion
and an \(L^\infty\)-test for the recovered symbol.  This is the main advantage of
the anti-diagonal criterion: it produces both the only possible symbol and, by
the anti-diagonal uniqueness lemma, the sufficiency of the representation.
This phase-space viewpoint is natural in Fock analysis and is closely related
to the Bargmann transform, Weyl calculus, and quantum harmonic analysis; see
\cite{Folland,BFRR}.

Our first application concerns the Toeplitzness of weighted composition operators
\[
W_{\psi,\varphi}h=\psi\cdot(h\circ\varphi).
\]
Here \(\psi,\varphi\in\Hol(\C)\), where \(\Hol(\C)\) denotes the space of
entire functions on \(\C\). 
Assume that \(W_{\psi,\varphi}=T_f\) for some \(f\in L^\infty(\C)\) and
\(\psi\not\equiv0\).  Then \(W_{\psi,\varphi}\) is bounded, and the standard
boundedness theorem for weighted composition operators first reduces the
composition symbol to the affine form
\[
\varphi(z)=a+\lambda z,\qquad |\lambda|\le1.
\]
The zeroth-order anti-diagonal recovery theorem then gives the only possible
bounded Toeplitz symbol.  This turns the sufficient criterion of
\cite[Theorem~3.12]{BFRR} into a complete characterization.  For
\(\lambda\ne0\), the candidate symbol is
\[
F_{\psi,a,\lambda}(w)
=
\frac1\lambda
\exp\!\left[
\alpha\left(
\frac{\lambda-1}{\lambda}|w|^2+\frac a\lambda\bar w
\right)\right]
\psi\!\left(\frac{w-a}{\lambda}\right).
\]
\begin{theorem}\label{thm:weighted}
	Let $\psi,\varphi\in\Hol(\C)$.  The following are equivalent.
	\begin{enumerate}[label=\textup{(\roman*)}]
		\item There exists $f\in L^\infty(\C,dA)$ such that $
		W_{\psi,\varphi}=T_f$ 
		on $\Focka$.
		
		\item Either $\psi\equiv0$, or there exist $a,\lambda\in\C$ such that
		\[
		\varphi(z)=a+\lambda z,\qquad \lambda\ne0,
		\]
		and
		\begin{equation}\label{eq:weighted-disk-alpha}
			\Re\lambda\ge|\lambda|^2,
		\end{equation}
		and
		\begin{equation}\label{eq:weighted-multiplier-condition-alpha}
			\sup_{z\in\C}|\psi(z)|
			\exp\!\left[
			\alpha\left(
			-(\Re\lambda-|\lambda|^2)|z|^2
			+\Re((2\lambda-1)\bar a z)
			\right)\right]<\infty.
		\end{equation}
	\end{enumerate}
Moreover, in the nonzero case \(\psi\not\equiv0\), the bounded Toeplitz symbol
is unique and equals
\[
f=F_{\psi,a,\lambda}
\qquad\text{a.e.}
\]

\end{theorem}

Our second application concerns generalized Volterra-type operators
\[
V_{(g,\varphi)}h(z)
=
\int_0^z h(\varphi(\zeta))g'(\zeta)\,d\zeta .
\]
For these operators, the first-order recovery theorem is the natural tool, since
\[
\partial_z\Kern_{V_{(g,\varphi)}}(w,z)
=
g'(z)e^{\alpha\varphi(z)\bar w}.
\]
After the affine reduction and the origin functional condition, the nonzero
Toeplitz case is forced to be centered:
\[
\varphi(z)=\lambda z,\qquad \lambda\ne0.
\]
In this case the first-order recovery theorem gives the candidate symbol
\[
\sigma_{\lambda,g}(w)
=
\frac{1}{\alpha\lambda\bar w}
\exp\!\left[
\alpha\frac{\lambda-1}{\lambda}|w|^2
\right]
g'\!\left(\frac w\lambda\right),
\qquad w\ne0.
\]
\begin{theorem} \label{thm:volterra-main}
	Let $g,\varphi\in\Hol(\C)$.  The following are equivalent.
	\begin{enumerate}[label=\textup{(\roman*)}]
		\item There exists $f\in L^\infty(\C,dA)$ such that $
		V_{(g,\varphi)}=T_f$ 
		on $\Focka$.
		\item Either $g$ is constant, or there exists
		$\lambda\in\C\setminus\{0\}$ such that $
		\varphi(z)=\lambda z$ 
		and $
		\sigma_{\lambda,g}\in L^\infty(\C,dA)$. 
	\end{enumerate}
In the nonzero case \(g'\not\equiv0\), the bounded symbol is unique and equals
\(\sigma_{\lambda,g}\) almost everywhere.  Moreover, the condition
\(\sigma_{\lambda,g}\in L^\infty(\C,dA)\) automatically implies
\begin{equation}\label{eq:volterra-automatic-conditions-alpha}
	g'(0)=0,
	\qquad
	\Re\lambda\ge|\lambda|^2>0.
\end{equation}
\end{theorem}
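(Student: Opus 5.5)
The plan is to obtain Theorem~\ref{thm:volterra-main} from the first-order recovery in Theorem~\ref{thm:higher-recovery}(b), since \(H_1\) is explicit for these operators. With \(K_w(z)=e^{\alpha z\bar w}\) and \(V=V_{(g,\varphi)}\), the reproducing property gives
\[
\Kern_V(w,z)=(VK_w)(z)=\int_0^z e^{\alpha\varphi(\zeta)\bar w}g'(\zeta)\,d\zeta .
\]
Hence
\[
H_1^V(w)=g'(-w)\,e^{\alpha\varphi(-w)\bar w}.
\]
If \(g\) is constant, then \(V=0=T_0\) and there is nothing to prove. So from now on I assume \(g'\not\equiv0\).

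For (i)\(\Rightarrow\)(ii), assume \(V=T_f\) with \(f\) bounded.

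First, \(V\) is bounded, and I would invoke the boundedness theory for generalized Volterra operators on \(\Focka\) (as in \cite{BFRR}) to reduce to \(\varphi(z)=a+\lambda z\). Should that reduction not be directly available, I would argue from \(H_1^V\in L^2(\C,dA)\), which Theorem~\ref{thm:higher-recovery}(b) guarantees. The function \(\log|g'(-w)|+\alpha\Re(\varphi(-w)\bar w)\) must then be bounded above in an averaged sense. A Liouville/Hadamard-type argument on the entire function \(\varphi\), using subharmonicity along rays, should force \(\varphi\) to be affine with \(\lambda\ne0\). The case \(\lambda=0\) should be excluded because a constant \(\varphi\) leaves \(H_1^V\) with Gaussian-free growth that is not square integrable against the required Fourier structure.

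Second, I would use the origin functional identity \eqref{eq:origin-functional-condition}. Since \(Vh(0)=0\) for every \(h\), it states
\[
\int_\C f(u)\,h(u)\,d\lambda_\alpha(u)=0\qquad\text{for all } h\in\Focka .
\]
Testing with \(h=K_w\) shows that the Gaussian-weighted Fourier/Laplace transform of \(f\) vanishes identically. This must be compatible with the explicit \(H_1^V(w)=g'(-w)e^{\alpha(a-\lambda w)\bar w}\). The key computation is
\[
\partial_z\langle T_fK_w,K_z\rangle_\alpha=\frac{\alpha^2}{\pi}\int_\C f(u)\,\bar u\,e^{\alpha(u\bar w+z\bar u)-\alpha|u|^2}\,dA(u).
\]
Matching this with \(g'(z)e^{\alpha(a+\lambda z)\bar w}\) at \(z=-w\), and comparing the \(\bar w\)-dependence of the two sides, should force the translation term to vanish, i.e.\ \(a=0\). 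Once \(\varphi=\lambda z\), I would carry out the inverse Fourier transform in \eqref{eq:first-order-recovered-symbol}; it is a Gaussian integral after the change of variables \(u=\lambda\zeta\). It should give \(f_V^{(1)}=\sigma_{\lambda,g}\), so boundedness of \(f\) yields \(\sigma_{\lambda,g}\in L^\infty\), and uniqueness comes from Theorem~\ref{thm:higher-recovery}.

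For (ii)\(\Rightarrow\)(i), assume \(\varphi=\lambda z\) and \(\sigma_{\lambda,g}\in L^\infty\). I would verify directly that \(\partial_z\langle T_{\sigma}K_w,K_z\rangle=g'(z)e^{\alpha\lambda z\bar w}\) for all \(z,w\), not only on the anti-diagonal. Substituting \(u=\lambda\zeta\) collapses the Gaussian, and the factor \(\bar u\) cancels the \(1/\bar w\) in \(\sigma_{\lambda,g}\). I would then check that both kernels vanish at \(z=0\); this is the origin condition. It holds because \(\int\sigma_{\lambda,g}K_w\,d\lambda_\alpha\) reduces, in polar coordinates, to an angular integral that vanishes thanks to the \(1/\bar w\) factor. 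The two kernels then agree, so \(V=T_{\sigma_{\lambda,g}}\).

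For the automatic conditions \eqref{eq:volterra-automatic-conditions-alpha}, boundedness near \(0\) of \(g'(w/\lambda)/(\alpha\lambda\bar w)\) forces \(g'(0)=0\). Boundedness at infinity, with \(g'\not\equiv0\) entire, requires \(\Re\frac{\lambda-1}{\lambda}\le0\), which is exactly \(\Re\lambda\ge|\lambda|^2\); and \(|\lambda|^2>0\) since \(\lambda\ne0\).

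I expect the main obstacle to be the first direction's rigidity steps: ruling out non-affine \(\varphi\), and showing that \(a=0\) is forced by the origin identity rather than merely by the Fourier inversion. For the latter I would first try the explicit polar/angular computation of \(\int f K_w\,d\lambda_\alpha\) with the candidate symbol for general \(a\), showing it is a nonzero multiple of a power of \(a\).
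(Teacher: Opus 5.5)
\textbf{Verdict: there is a genuine gap, in the (i)$\Rightarrow$(ii) step that forces $a=0$.} Your (ii)$\Rightarrow$(i) direction and the automatic conditions are correct, and that direction takes a more direct route than the paper. The complex-parameter reproducing formula gives $\partial_z\Kern_{T_{\sigma_{\lambda,g}}}(w,z)=g'(z)e^{\alpha\lambda z\bar w}$ on all of $\C^2$. The factor $1/\bar u=e^{i\theta}/r$ makes every angular frequency positive, so $\Kern_{T_{\sigma_{\lambda,g}}}(w,0)=0$. This avoids the Fourier inversion and Theorem~\ref{thm:higher-recovery}(b) in that direction.

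\textbf{Why your proposed mechanism cannot force $a=0$.} Comparing the $\bar w$-dependence of the anti-diagonal data does not do it. Matching $H_1$ only identifies $f=\Sigma_{a,\lambda,g}$, and that is compatible with $a\neq0$. For example, take $\lambda=1$ and $g'(v)=(v+a)e^{-\alpha\bar a v}$. Then $|\Sigma_{a,1,g}|\equiv e^{\alpha|a|^2}/\alpha$, and $V_{(g,a+z)}-T_{\Sigma_{a,1,g}}$ maps $F^2_\alpha$ into the constants via $h\mapsto -T_{\Sigma_{a,1,g}}h(0)$. The origin identity alone does not help either: $\int_\C fK_w\,d\lambda_\alpha=\Kern_V(w,0)=0$ holds automatically once $V=T_f$. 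All the information is in the value of $T_{\Sigma_{a,\lambda,g}}h(0)$, and you have not computed it.

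\textbf{Your proposed computation does not behave as you expect.} For $a\neq0$ the factor $e^{\alpha a\bar u/\lambda}$ in $\Sigma_{a,\lambda,g}$ introduces negative angular frequencies, so the polar cancellation that works at $a=0$ fails. The answer is also not ``a nonzero multiple of a power of $a$''. It is
\[
T_{\Sigma_{a,\lambda,g}}h(0)=\frac1\lambda\int_0^a g'\!\left(\frac{t-a}{\lambda}\right)h(t)\,dt ,
\]
which depends on $g$ and $h$ and can vanish for individual $h$; for $h\equiv1$ it equals $g(0)-g(-a/\lambda)$.

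\textbf{How the paper closes the step.} Lemma~\ref{lem:volterra-origin} proves the displayed formula. It introduces $I_h(\eta)$, shows $I_h(0)=0$ by the angular argument and $I_h'(\eta)=R(\eta)/\lambda$ by Lemma~\ref{lem:complex-reproducing}, and integrates from $0$ to $a$. The paper then tests all $h(t)=p(t/a)$ and uses Weierstrass approximation to force $g'$ to vanish on a segment, contradicting $g'\not\equiv0$.

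\textbf{A quicker fix in your own framework.} Evaluate $\Kern_V=\Kern_{T_{\Sigma_{a,\lambda,g}}}$ at $z=-a/\lambda$ instead of $z=0$. There the exponential $e^{\alpha a\bar u/\lambda}$ cancels, so the angular argument kills the Toeplitz side. You are left with $\int_0^{-a/\lambda}e^{\alpha(a+\lambda\zeta)\bar w}g'(\zeta)\,d\zeta=0$ for all $w$. The Taylor coefficients in $\bar w$ give vanishing moments, and the same conclusion follows.

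\textbf{Smaller points.}
\begin{itemize}
\item Before inverting you need $\Re\lambda>0$, which follows from $H_1\in L^2$ and the submean inequality.
\item The exclusion of $\lambda=0$ should be stated precisely: $|H_1|$ is the modulus of an entire function, and a nonzero entire function is not in $L^2(\C,dA)$.
\item For a general entire $g'$, the inversion is an $L^2$ limit as in Lemma~\ref{lem:weighted-inversion}, not a literal Gaussian integral.
\end{itemize}
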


The paper is organized as follows.  Section~2 
introduces the Fourier transform, and establishes the anti-diagonal recovery
criteria.  Section~3 applies these criteria to weighted composition operators
and generalized Volterra-type operators.
\section{Anti-diagonal symbol recovery}

\subsection{Fock space, Toeplitz operators, and canonical kernels}

Fix $\alpha>0$ and put
\[
d\lambda_\alpha(z)=\frac\alpha{\pi}e^{-\alpha|z|^2}\,dA(z),
\qquad
\Focka=\Hol(\C)\cap L^2(\C,d\lambda_\alpha),
\]
where $dA$ denotes Lebesgue area measure.  The inner product is linear in the
first variable:
\[
\langle h,k\rangle_\alpha
=
\int_\C h(z)\overline{k(z)}\,d\lambda_\alpha(z).
\]
The reproducing kernel and its norm are
\[
K_w(z)=e^{\alpha z\bar w},
\qquad
\|K_w\|_\alpha=e^{\alpha|w|^2/2}.
\]

For $f\in L^\infty(\C,dA)$, define
\[
T_fh(z)
=
\frac\alpha{\pi}\int_\C
f(u)h(u)e^{\alpha z\bar u-\alpha|u|^2}\,dA(u).
\]
Let $\TopAlg$ denote the $C^*$-algebra on $\Focka$ generated by all $T_f$ with
$f\in L^\infty(\C,dA)$.  

For a bounded operator $A$ on $\Focka$, its canonical kernel is
\[
\Kern_A(w,z)
=
\langle AK_w,K_z\rangle_\alpha.
\]
Then
\[
Ah(z)=\int_\C \Kern_A(w,z)h(w)\,d\lambda_\alpha(w),
\]
and
\begin{equation}\label{eq:toeplitz-kernel-alpha}
	\Kern_{T_f}(w,z)
	=
	\frac\alpha{\pi}\int_\C
	f(u)e^{-\alpha|u|^2+\alpha u\bar w+\alpha z\bar u}\,dA(u).
\end{equation}
The function $\Kern_A$ is anti-entire in $w$ and entire in $z$.

Set
\begin{align}
	H_0^A(w)&:=\Kern_A(w,-w),\\
	H_1^A(w)&:=\left.\partial_z\Kern_A(w,z)\right|_{z=-w}.
\end{align}
In particular, \(H_0^A\) and \(H_1^A\) are continuous functions on \(\C\).

The Berezin transform of $A$ is
\[
\widetilde A(z)=\langle Ak_z,k_z\rangle_\alpha.
\]
For a bounded symbol $f$,
\begin{equation}\label{eq:berezin-symbol-alpha}
	\widetilde{T_f}(z)
	=
	\widetilde f(z)
	=
	\frac\alpha{\pi}
	\int_\C f(u)e^{-\alpha|u-z|^2}\,dA(u).
\end{equation} 
\subsection{Unitary change of Gaussian scale}

\begin{lemma} \label{lem:dilation}
Put $
\rho_\alpha=\sqrt{\alpha}$. 	The map
	\[
	U_\alpha:\Focka\longrightarrow\Fockone,
	\qquad
	(U_\alpha h)(z)=h\!\left(\frac z{\rho_\alpha}\right),
	\]
	is unitary.  If
	\[
	f_\alpha(z)=f\!\left(\frac z{\rho_\alpha}\right),\qquad
	\psi_\alpha(z)=\psi\!\left(\frac z{\rho_\alpha}\right),\qquad
	\varphi_\alpha(z)=\rho_\alpha\varphi\!\left(\frac z{\rho_\alpha}\right),
	\]
	and
	\[
	g_\alpha(z)=g\!\left(\frac z{\rho_\alpha}\right),
	\]
	then
	\begin{align}
		U_\alpha T_f U_\alpha^{-1}
		&=T_{f_\alpha}^{(1)},\label{eq:dilation-toeplitz}\\
		U_\alpha W_{\psi,\varphi}U_\alpha^{-1}
		&=W_{\psi_\alpha,\varphi_\alpha},\label{eq:dilation-weighted}\\
		U_\alpha V_{(g,\varphi)}U_\alpha^{-1}
		&=V_{(g_\alpha,\varphi_\alpha)}.\label{eq:dilation-volterra}
	\end{align}
	In particular, boundedness, compactness and membership in the Toeplitz
	algebra  are invariant under this
	change of scale.
\end{lemma}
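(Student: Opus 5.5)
The plan is to verify unitarity by a direct change of variables, and then check each of the three conjugation identities on the dense set of kernel functions or, more simply, pointwise. The notation $T^{(1)}_{f_\alpha}$ denotes the Toeplitz operator on $\Fockone$, i.e.\ the case $\alpha=1$ of the integral formula for $T_f$.

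\textbf{Unitarity.} For $h\in\Focka$, substitute $z=\rho_\alpha u$, so that $dA(z)=\alpha\,dA(u)$ and $|z|^2=\alpha|u|^2$:
\[
\frac1\pi\int_\C |h(z/\rho_\alpha)|^2e^{-|z|^2}\,dA(z)
=\frac\alpha\pi\int_\C|h(u)|^2e^{-\alpha|u|^2}\,dA(u).
\]
Hence $U_\alpha$ is an isometry into $\Fockone$. It preserves entireness, and its inverse $k\mapsto k(\rho_\alpha\,\cdot)$ is of the same form. Therefore $U_\alpha$ is a surjective isometry, i.e.\ unitary.

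\textbf{Toeplitz identity \eqref{eq:dilation-toeplitz}.} Take $k\in\Fockone$ and put $h=U_\alpha^{-1}k=k(\rho_\alpha\cdot)$. Then
\[
(U_\alpha T_fh)(z)=\frac\alpha\pi\int_\C f(u)\,k(\rho_\alpha u)\,e^{\alpha (z/\rho_\alpha)\bar u-\alpha|u|^2}\,dA(u).
\]
Substitute $v=\rho_\alpha u$. The exponent becomes $z\bar v-|v|^2$ and the factor $\alpha$ cancels against the Jacobian. This gives $\frac1\pi\int_\C f_\alpha(v)k(v)e^{z\bar v-|v|^2}\,dA(v)=T^{(1)}_{f_\alpha}k(z)$.

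\textbf{Weighted composition identity \eqref{eq:dilation-weighted}.} Here
\[
U_\alpha W_{\psi,\varphi}U_\alpha^{-1}k(z)=\psi(z/\rho_\alpha)\,k\bigl(\rho_\alpha\varphi(z/\rho_\alpha)\bigr)=\psi_\alpha(z)\,k(\varphi_\alpha(z)).
\]
This identity is understood as an identity of (possibly unbounded) operators on their natural domains. Since $U_\alpha$ maps $\Hol(\C)$ bijectively onto itself, the domains correspond.

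\textbf{Volterra identity \eqref{eq:dilation-volterra}.} We have
\[
U_\alpha V_{(g,\varphi)}U_\alpha^{-1}k(z)=\int_0^{z/\rho_\alpha}k(\rho_\alpha\varphi(\zeta))\,g'(\zeta)\,d\zeta.
\]
Substitute $\zeta=\eta/\rho_\alpha$. Then $g'(\zeta)\,d\zeta=g_\alpha'(\eta)\,d\eta$, because $g_\alpha'(\eta)=g'(\eta/\rho_\alpha)/\rho_\alpha$. Also $\rho_\alpha\varphi(\eta/\rho_\alpha)=\varphi_\alpha(\eta)$, so the integral equals $V_{(g_\alpha,\varphi_\alpha)}k(z)$.

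\textbf{Invariance.} Boundedness and compactness are preserved under unitary conjugation. By \eqref{eq:dilation-toeplitz}, conjugation by $U_\alpha$ maps the generators $T_f$ of $\TopAlg$ onto the generators $T^{(1)}_{f_\alpha}$ of $\mathcal T_1$. The map $f\mapsto f_\alpha$ is a bijection of $L^\infty$. Since $A\mapsto U_\alpha AU_\alpha^{-1}$ is a $*$-isomorphism, it carries the generated $C^*$-algebra onto the generated $C^*$-algebra.

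There is no real obstacle in this argument. The only care needed is the bookkeeping of the Jacobian $\alpha$ against the normalizing constant, and the interpretation of the weighted composition and Volterra identities on natural domains when these operators are unbounded.
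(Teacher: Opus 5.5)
Your proof is correct and follows essentially the paper's route: a change of variables $z=\rho_\alpha u$ for unitarity, and direct substitution for the weighted composition and Volterra identities, using $g_\alpha'(\eta)=\rho_\alpha^{-1}g'(\eta/\rho_\alpha)$. The only difference is in the Toeplitz identity: you substitute $v=\rho_\alpha u$ directly in the integral formula, while the paper observes that $U_\alpha$ intertwines the two Fock projections and uses $T_f=P_\alpha M_f$; both come down to the same Jacobian bookkeeping.
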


\begin{proof}
	The change of variables $z=\rho_\alpha u$ gives
	\[
	\|U_\alpha h\|_1^2
	=
	\frac1\pi\int_\C
	\left|h\!\left(\frac z{\rho_\alpha}\right)\right|^2e^{-|z|^2}\,dA(z)
	=
	\frac\alpha{\pi}\int_\C |h(u)|^2e^{-\alpha|u|^2}\,dA(u).
	\]
	Thus $U_\alpha$ is unitary.  Since it intertwines the orthogonal projections onto
	the two Fock spaces, \eqref{eq:dilation-toeplitz} follows from
	$T_f=P_\alpha M_f$.  The identities
	\eqref{eq:dilation-weighted} and \eqref{eq:dilation-volterra} follow by direct
	substitution; for the latter, use
	$g_\alpha'(z)=\rho_\alpha^{-1}g'(z/\rho_\alpha)$.
\end{proof}

\subsection{The Fourier transform}

For $G\in L^1(\C,dA)$, define
\begin{equation}\label{eq:symplectic-fourier-alpha}
	(\Fsa G)(w)
	=\int_\C G(u)e^{\alpha(u\bar w-\bar u w)}\,dA(u).
\end{equation}
Write $u=x+iy$ and $w=s+it$.  Then
\[
\alpha(u\bar w-\bar u w)
=2i\alpha(ys-xt).
\]
Let $\mathcal F$ denote the Euclidean Fourier transform on $\R^2$ with $X=(x,y)$ 
\[
(\mathcal F G)(\xi)
=\int_{\R^2}G(X)e^{-2\pi iX\cdot\xi}\,dx dy.
\]
Folland  records that $\mathcal F $ is an isometry on
$L^2$; see \cite[Prologue, p.~5]{Folland}.   Write
\[
2i\alpha(ys-xt)
=
-2\pi i X\cdot
\left(\frac{\alpha t}{\pi},-\frac{\alpha s}{\pi}\right).
\]
Hence
\[
(\Fsa G)(s+it)
=
(\mathcal F G)\left(\frac{\alpha t}{\pi},
-\frac{\alpha s}{\pi}\right).
\]
By Plancherel for $\mathcal F $ and the change of variables
\[
\xi_1=\frac{\alpha t}{\pi},
\qquad
\xi_2=-\frac{\alpha s}{\pi},
\]
we obtain
\begin{equation}\label{eq:plancherel-alpha}
	\|\Fsa G\|_{L^2(\C,dA)}
	=\frac{\pi}{\alpha}\|G\|_{L^2(\C,dA)}.
\end{equation}
Consequently,
\begin{equation}\label{eq:inverse-symplectic-alpha}
	(\Fsa^{-1}H)(u)
	=\frac{\alpha^2}{\pi^2}\int_\C
	H(w)e^{-\alpha u\bar w+\alpha\bar u w}\,dA(w),
\end{equation}
first for Schwartz functions and then, by Plancherel, on $L^2(\C,dA)$.

For
\[
G_{f,\alpha}(u)
=
\frac\alpha{\pi}f(u)e^{-\alpha|u|^2},
\]
formula \eqref{eq:toeplitz-kernel-alpha} gives
\begin{equation}\label{eq:anti-diagonal-toeplitz-alpha}
	\Kern_{T_f}(w,-w)
	=
	(\Fsa G_{f,\alpha})(w).
\end{equation}

\begin{lemma}\label{lem:anti-diagonal-uniqueness}
	Let \(F(w,z)\) be anti-entire in \(w\) and entire in \(z\). If
	\[
	F(w,-w)=0\qquad (w\in\mathbb C),
	\]
	then \(F\equiv 0\) on \(\mathbb C^2\).
\end{lemma}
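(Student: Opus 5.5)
We reduce to an entire function of two independent complex variables by conjugating the anti-entire variable. Define
\[
G(\zeta,z):=\overline{F(\bar\zeta,\,\overline{\cdot}\,)}\ \text{ suitably, i.e. }\ G(\zeta,z):=F(\bar\zeta,z),
\]
so that $G$ is entire in $\zeta$ (since $F$ is anti-entire in $w$) and entire in $z$. By Hartogs' theorem on separate analyticity, $G$ is then jointly holomorphic on $\C^2$. In applications $F=\Kern_A$, which is in fact real-analytic/jointly continuous, so one could avoid invoking Hartogs by citing joint continuity plus Osgood's lemma; I would mention both.

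The hypothesis reads $G(\bar w,-w)=0$ for all $w\in\C$. The set $\{(\bar w,-w):w\in\C\}$ is a totally real $2$-dimensional real subspace of $\C^2$: writing $w=s+it$, it is the real span of $(1,-1)$ and $(-i,-i)$, and $\mathrm{span}_\R\cap i\,\mathrm{span}_\R=\{0\}$. Indeed, $i(1,-1)=(i,-i)$ is not a real combination of $(1,-1)$ and $(-i,-i)$, so the complex span of this subspace is all of $\C^2$. Change coordinates by the complex linear map $(\zeta,z)\mapsto(\xi,\eta)$ with
\[
\zeta=\xi-i\eta,\qquad z=-\xi-i\eta .
\]
This map is invertible, with determinant $-i-i=-2i\neq0$, and it carries $\R^2\subset\C^2$ onto the set $\{(\bar w,-w)\}$ via $w=\xi+i\eta$. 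Then $H(\xi,\eta):=G(\xi-i\eta,-\xi-i\eta)$ is entire on $\C^2$ and vanishes on $\R^2$.

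A holomorphic function on $\C^2$ vanishing on $\R^2$ vanishes identically. The power series of $H$ at the origin has coefficients $\partial_\xi^j\partial_\eta^k H(0)$, and these equal the real partial derivatives of $H|_{\R^2}$, which are all $0$. Hence $H\equiv0$ near $0$, and therefore everywhere. Inverting the linear change gives $G\equiv0$, hence $F\equiv0$.

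The only delicate point is the joint holomorphy of $G$, which justifies expanding in a two-variable power series. Hartogs handles this in general. Alternatively, one can argue by slices: for fixed $z$, expand $F(w,z)=\sum_n c_n(z)\bar w^n$ and differentiate the identity along the antidiagonal. The Hartogs/Taylor route is cleaner, so I would use it.
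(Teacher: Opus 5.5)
Your argument is correct and is essentially the paper's. The paper sets $H(z,\zeta)=F(-\bar\zeta,z)$, notes that it vanishes on the totally real set $\{\zeta=\bar z\}$, and cites the diagonal uniqueness theorem; you instead reflect via $G(\zeta,z)=F(\bar\zeta,z)$ and prove that uniqueness principle directly, by the invertible linear change of variables onto $\mathbb{R}^2$ and the vanishing of the Taylor coefficients, and you also make explicit the Hartogs step for joint holomorphy, which the paper leaves implicit.
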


\begin{proof}
	Define
	\[
	H(z,\zeta)=F(-\overline{\zeta},z),\qquad (z,\zeta)\in\mathbb C^2.
	\]
	Since \(F\) is anti-entire in the first variable and entire in the second
	variable, \(H\) is entire in the two complex variables \(z\) and \(\zeta\).
	For every \(z\in\mathbb C\), using the hypothesis with \(w=-z\), we get
	\[
	H(z,\bar z)=F(-z,z)=0.
	\]
	By the diagonal uniqueness theorem for holomorphic functions; see \cite{MR1162310}, applied to
	the diagonal \(\{(z,\zeta):\zeta=\bar z\}\), it follows that \(H\equiv 0\)
	on \(\mathbb C^2\). Hence, for arbitrary \(w,z\in\mathbb C\),
	\[
	F(w,z)=H(z,-\bar w)=0.
	\]
	Therefore \(F\equiv 0\).
\end{proof}

\subsection{The main Fourier criterion for Toeplitzness}

\begin{proof}[Proof of Theorem~\ref{thm:higher-recovery}]
	We first prove the necessity.  Suppose that \(A=T_f\) with
	\(f\in L^\infty(\C,dA)\).  From \eqref{eq:toeplitz-kernel-alpha},
	\[
	\Kern_{T_f}(w,-w)
	=
	\frac\alpha{\pi}
	\int_\C
	f(u)e^{-\alpha|u|^2+\alpha(u\bar w-\bar u w)}\,dA(u).
	\]
	Thus
	\begin{equation}\label{eq:H0Toeplitz}
		H_0^{T_f}
		=
		\Fsa\!\left(
		\frac\alpha{\pi}
		f(u)e^{-\alpha|u|^2}
		\right).
	\end{equation}
	Differentiating \eqref{eq:toeplitz-kernel-alpha} with respect to the
	holomorphic kernel variable gives
	\[
	\partial_z\Kern_{T_f}(w,z)
	=
	\frac\alpha{\pi}
	\int_\C
	\alpha\bar u
	f(u)e^{-\alpha|u|^2+\alpha u\bar w+\alpha z\bar u}\,dA(u).
	\]
	The differentiation is justified locally uniformly in \(z\) by
	Cauchy--Schwarz and Gaussian decay.  Setting \(z=-w\), we obtain
	\begin{equation}\label{eq:H1Toeplitz}
		H_1^{T_f}
		=
		\Fsa\!\left(
		\frac{\alpha^2}{\pi}
		\bar u f(u)e^{-\alpha|u|^2}
		\right).
	\end{equation}
	Since
	\[
	f(u)e^{-\alpha|u|^2},
	\qquad
	\bar u f(u)e^{-\alpha|u|^2}
	\]
	belong to \(L^1(\C,dA)\cap L^2(\C,dA)\), Plancherel inversion for \(\Fsa\) gives
	\[
	\Fsa^{-1}H_0^{T_f}(u)
	=
	\frac\alpha{\pi}f(u)e^{-\alpha|u|^2}
	\]
	and
	\[
	\Fsa^{-1}H_1^{T_f}(u)
	=
	\frac{\alpha^2}{\pi}\bar u f(u)e^{-\alpha|u|^2}
	\]
	almost everywhere.  Therefore
	\[
	f_A^{(0)}(u)=f(u)
	\quad\text{a.e.}
	\]
	and, for a.e. \(u\in\C\setminus\{0\}\),
	\[
	f_A^{(1)}(u)=f(u).
	\]
	Hence
	\[
	f_A^{(0)},\ f_A^{(1)}\in L^\infty(\C,dA).
	\]
	Moreover, since \(f_A^{(1)}=f\) a.e., we have
	\[
	T_{f_A^{(1)}}=T_f.
	\]
	Thus
	\[
	Ah(0)=T_fh(0)=T_{f_A^{(1)}}h(0),
	\qquad h\in\Focka.
	\]
	This proves the necessity in both \textup{(a)} and \textup{(b)}.
	
	We now prove the sufficiency in \textup{(a)}.  Assume that
	\(H_0^A\in L^2(\C,dA)\) and that \(f_A^{(0)}\in L^\infty(\C,dA)\).  Put
	\[
	G=\Fsa^{-1}H_0^A.
	\]
	By the definition of \(f_A^{(0)}\),
	\[
	G(u)
	=
	\frac\alpha{\pi}f_A^{(0)}(u)e^{-\alpha|u|^2}
	\qquad\text{a.e.}
	\]
	Hence \(G\in L^1(\C,dA)\cap L^2(\C,dA)\).  Therefore \(\Fsa G\) is continuous, and
	Plancherel gives
	\[
	\Fsa G=H_0^A
	\qquad\text{a.e. on }\C.
	\]
	Since \(H_0^A\) is continuous, the equality holds everywhere.  By
	\eqref{eq:toeplitz-kernel-alpha},
	\[
	\Kern_A(w,-w)
	=
	\Kern_{T_{f_A^{(0)}}}(w,-w),
	\qquad w\in\C.
	\]
	Lemma~\ref{lem:anti-diagonal-uniqueness} gives equality of the two canonical
	kernels on \(\C^2\). Hence
	\[
	A=T_{f_A^{(0)}}.
	\]
	
	We next prove the sufficiency in \textup{(b)}.  Assume that
	\(H_1^A\in L^2(\C,dA)\), that the a.e. defined function \(f_A^{(1)}\) given by
	\eqref{eq:first-order-recovered-symbol} belongs to \(L^\infty(\C,dA)\), and that the origin functional identity
	\[
	Ah(0)=T_{f_A^{(1)}}h(0),
	\qquad h\in\Focka,
	\]
	holds.  Choose an \(L^\infty\)-representative of \(f_A^{(1)}\), and denote it
	by \(f\).  Since Toeplitz operators with bounded symbols depend only on the
	a.e. equivalence class of the symbol, we have
	\[
	T_f=T_{f_A^{(1)}}.
	\]
	By the definition of \(f_A^{(1)}\), we have
	\[
	\Fsa^{-1}H_1^A(u)
	=
	\frac{\alpha^2}{\pi}
	\bar u f(u)e^{-\alpha|u|^2}
	\qquad\text{for a.e. }u\in\C.
	\]
	The right-hand side belongs to \(L^1(\C,dA)\cap L^2(\C,dA)\).  Hence Plancherel gives
	\[
	H_1^A=H_1^{T_f}
	\qquad\text{a.e. on }\C.
	\]
	Both \(H_1^A\) and \(H_1^{T_f}\) are continuous functions on \(\C\), so the
	equality holds everywhere:
	\[
	H_1^A(w)=H_1^{T_f}(w),
	\qquad w\in\C.
	\]
	
	Apply Lemma~\ref{lem:anti-diagonal-uniqueness} to
	\[
	F(w,z)
	=
	\partial_z
	\bigl(
	\Kern_A(w,z)-\Kern_{T_f}(w,z)
	\bigr).
	\]
	Since
	\[
	F(w,-w)=H_1^A(w)-H_1^{T_f}(w)=0,
	\qquad w\in\C,
	\]
	we obtain
	\[
	F\equiv0
	\qquad\text{on }\C^2.
	\]
	Therefore
	\[
	\partial_z
	\bigl(
	\Kern_A(w,z)-\Kern_{T_f}(w,z)
	\bigr)
	=
	0,
	\qquad w,z\in\C.
	\]
	Thus
	\[
	\Kern_A(w,z)-\Kern_{T_f}(w,z)
	\]
	is independent of \(z\).  It follows from the canonical integral representation
	that \((A-T_f)h\) is constant for every \(h\in\Focka\).  Hence \(A=T_f\) if and
	only if
	\[
	(A-T_f)h(0)=0,
	\qquad h\in\Focka.
	\]
	But by the assumed   the origin functional identity and \(T_f=T_{f_A^{(1)}}\),
	\[
	(A-T_f)h(0)
	=
	Ah(0)-T_{f_A^{(1)}}h(0)
	=
	0,
	\qquad h\in\Focka.
	\]
	Consequently,
	\[
	A=T_f=T_{f_A^{(1)}}.
	\]
	
	It remains to prove uniqueness.  Suppose that \(T_f=T_g\) for two bounded
	symbols \(f,g\in L^\infty(\C,dA)\).  Then \(T_{f-g}=0\).  Applying
	\eqref{eq:H0Toeplitz} to \(f-g\), we obtain
	\[
	\Fsa\!\left(
	\frac\alpha{\pi}(f-g)e^{-\alpha|u|^2}
	\right)=0.
	\]
	Plancherel inversion gives
	\[
	\frac\alpha{\pi}(f-g)e^{-\alpha|u|^2}=0
	\quad\text{a.e.}
	\]
	Therefore \(f=g\) almost everywhere.  The bounded Toeplitz symbol is unique.
\end{proof}

For $m\ge0$, let $\Pol_m$ denote the space of   polynomials of degree at
most $m$.  We set $\Pol_{-1}:=\{0\}$.
\begin{remark}
	The same argument gives the following higher-order version.  For \(n\ge0\), set
	\[
	H_n^A(w)
	=
	\left.\partial_z^n\Kern_A(w,z)\right|_{z=-w}.
	\]
	If \(A=T_f\) with \(f\in L^\infty(\C,dA)\), then
	\[
	H_n^A
	=
	\Fsa\!\left(
	\frac{\alpha^{n+1}}{\pi}
	\bar u^{\,n}f(u)e^{-\alpha|u|^2}
	\right).
	\]
	
	Conversely, let \(n\ge1\).  Assume that \(H_n^A\in L^2(\C,dA)\), and define the
	a.e. defined function on \(\C\) by
	\[
	f_A^{(n)}(u)
	:=
	\frac{1}{\bar u^{\,n}}
	\frac{\pi}{\alpha^{n+1}}
	e^{\alpha|u|^2}
	\bigl(\Fsa^{-1}H_n^A\bigr)(u),
	\qquad u\ne0.
	\]
	If \(f_A^{(n)}\in L^\infty(\C,dA)\), and if \(f\) is any \(L^\infty\)-representative
	of this a.e. equivalence class, then
	\[
	\operatorname{Ran}(A-T_f)\subseteq\Pol_{n-1}.
	\]
	Indeed, the definition of \(f_A^{(n)}\) gives
	\[
	\Fsa^{-1}H_n^A(u)
	=
	\frac{\alpha^{n+1}}{\pi}
	\bar u^{\,n}f(u)e^{-\alpha|u|^2}
	\qquad\text{a.e.}
	\]
	Thus, by Plancherel and the continuity of the anti-diagonal restriction,
	\[
	H_n^A(w)=H_n^{T_f}(w),
	\qquad w\in\C.
	\]
	Applying Lemma~\ref{lem:anti-diagonal-uniqueness} to
	\[
	\partial_z^n
	\bigl(\Kern_A(w,z)-\Kern_{T_f}(w,z)\bigr)
	\]
	yields
	\[
	\partial_z^n
	\bigl(\Kern_A(w,z)-\Kern_{T_f}(w,z)\bigr)
	\equiv0.
	\]
	Hence, for every \(h\in\Focka\), the function \((A-T_f)h\) is a polynomial of
	degree at most \(n-1\).
	
	Consequently, \(A=T_f\) if and only if, for every \(h\in\Focka\),
	\[
	\left.\partial_z^j (A-T_f)h(z)\right|_{z=0}=0,
	\qquad 0\le j\le n-1.
	\]
	Equivalently,
	\[
	\left.
	\partial_z^j
	\bigl(\Kern_A(w,z)-\Kern_{T_f}(w,z)\bigr)
	\right|_{z=0}
	=0,
	\qquad
	w\in\C,\quad 0\le j\le n-1.
	\] 
\end{remark}

We shall use the following complex-parameter version of the Fock reproducing
formula.  When \(c>0\) is real, it is the usual reproducing formula for the Fock
space with Gaussian parameter \(c\).
\begin{lemma} \label{lem:complex-reproducing}
	Let $c,b,z\in\C$ with $\Re c>0$, and let $f\in\Hol(\C)$.  Assume
	\begin{equation}\label{eq:complex-reproducing-integrability}
		\int_\C |f(v)|e^{-(\Re c)|v|^2+\Re(bv+cz\bar v)}\,dA(v)<\infty.
	\end{equation}
	Then
	\begin{equation}\label{eq:complex-reproducing}
		\frac c\pi\int_\C f(v)e^{-c|v|^2+bv+cz\bar v}\,dA(v)
		=
		e^{bz}f(z).
	\end{equation}
\end{lemma}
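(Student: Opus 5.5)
The strategy is to reduce to the real case $c>0$, where \eqref{eq:complex-reproducing} is the usual reproducing formula, and then extend to complex $c$ by analytic continuation. Write $c=c_1+ic_2$ with $c_1=\Re c>0$. The first step removes the linear term $bv$. Put $g(v)=f(v)e^{bv}$, which is entire. The integrand becomes $g(v)e^{-c|v|^2+cz\bar v}$, and the target $e^{bz}f(z)$ becomes $g(z)$. Hypothesis \eqref{eq:complex-reproducing-integrability} is exactly absolute integrability of this new integrand. So it suffices to prove
\[
\frac c\pi\int_\C g(v)e^{-c|v|^2+cz\bar v}\,dA(v)=g(z)
\]
for entire $g$ under that integrability.

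The second step is a direct mean-value computation, which avoids any continuation in $c$. Complete the square:
\[
-c|v|^2+cz\bar v=-c|v-z|^2+c(v-z)\bar z\cdot 0+\dots
\]
It is simpler to substitute $v=z+\zeta$. Then
\[
-c|v|^2+cz\bar v=-c|\zeta|^2-c\zeta\bar z,
\]
since $-c(|z|^2+z\bar\zeta+\bar z\zeta+|\zeta|^2)+c(|z|^2+z\bar\zeta)=-c|\zeta|^2-c\bar z\zeta$. The integrand is therefore $g(z+\zeta)e^{-c\bar z\zeta}\,e^{-c|\zeta|^2}$. Here $k(\zeta):=g(z+\zeta)e^{-c\bar z\zeta}$ is entire in $\zeta$ with $k(0)=g(z)$, and the weight $e^{-c|\zeta|^2}$ is radial.

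Next integrate in polar coordinates $\zeta=re^{i\theta}$. By absolute integrability and Fubini, the integral equals
\[
\int_0^\infty e^{-cr^2}\Bigl(\int_0^{2\pi}k(re^{i\theta})\,d\theta\Bigr)r\,dr .
\]
The mean value property gives $\int_0^{2\pi}k(re^{i\theta})\,d\theta=2\pi k(0)$. Since $\Re c>0$, we have $\int_0^\infty 2\pi r e^{-cr^2}\,dr=\pi/c$. Hence the integral equals $(\pi/c)g(z)$, which is the claim.

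The main obstacle is justifying Fubini in polar coordinates. The needed fact is that $\int_0^\infty\int_0^{2\pi}|k(re^{i\theta})|e^{-c_1r^2}\,d\theta\,r\,dr<\infty$. This follows from \eqref{eq:complex-reproducing-integrability} after the translation: $|g(v)e^{-c|v|^2+cz\bar v}|$ equals $|f(v)|e^{-c_1|v|^2+\Re(bv+cz\bar v)}$, and the translation is measure preserving. With that in hand, the inner angular integral exists for a.e. $r$. The mean value property holds for every $r$ because $k$ is entire. So the computation is rigorous, and no growth assumption on $f$ is needed beyond \eqref{eq:complex-reproducing-integrability}.
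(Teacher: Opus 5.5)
Your proof is correct, but it takes a different route from the paper.

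The paper keeps polar coordinates centred at the origin. It expands $p(v)=e^{bv}f(v)=\sum_n d_nv^n$ and evaluates the angular integral of $p(re^{i\theta})e^{czre^{-i\theta}}$ by matching Fourier modes, which gives $2\pi\sum_n d_n(cz)^nr^{2n}/n!$. It then integrates term by term using $\int_0^\infty e^{-cr^2}r^{2n+1}\,dr=n!/(2c^{n+1})$.

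You instead recentre at $z$. Your identity $-c|v|^2+cz\bar v=-c|\zeta|^2-c\bar z\zeta$ is right, and it shows that the anti-holomorphic cross terms cancel. The integrand is therefore an entire function of $\zeta$ times the radial weight $e^{-c|\zeta|^2}$. The mean-value property and the single moment $\int_0^\infty re^{-cr^2}\,dr=1/(2c)$ then finish the proof.

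What your version buys is that the only interchange of limits is Fubini in polar coordinates. That is exactly the hypothesis, transported by a measure-preserving translation; the two integrands have the same modulus because the exponents agree as complex numbers. The paper's version also has to exchange the radial integral with the series, which it asserts without comment. This is legitimate, since the termwise absolute bound is $\frac{1}{2\Re c}\sum_n|d_n|\bigl(|c||z|/\Re c\bigr)^n<\infty$ for entire $p$, but it is an extra step your argument avoids. In exchange, the paper's computation is the standard orthogonality-of-monomials calculation and needs no change of variables.

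When you write it up, delete the opening sentence about reducing to real $c$ and continuing analytically, which you never use. Also delete the garbled, incomplete \emph{complete the square} display, which is not a correct equation as written.
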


\begin{proof}
	Set $p(v)=e^{bv}f(v)=\sum_{n\ge0}d_nv^n$.  By
	\eqref{eq:complex-reproducing-integrability}, polar coordinates and Fubini's
	theorem are applicable.  For fixed $r>0$,
	\[
	\int_0^{2\pi}p(re^{i\theta})e^{czre^{-i\theta}}\,d\theta
	=
	2\pi\sum_{n=0}^\infty d_n\frac{(cz)^n}{n!}r^{2n}.
	\]
	The resulting series can be integrated termwise, and
	\[
	\int_0^\infty e^{-cr^2}r^{2n+1}\,dr=\frac{n!}{2c^{n+1}}.
	\]
	Consequently,
	\[
	\int_\C p(v)e^{-c|v|^2+cz\bar v}\,dA(v)
	=
	\frac\pi c\sum_{n=0}^\infty d_nz^n
	=
	\frac\pi c p(z),
	\]
	which is \eqref{eq:complex-reproducing}.
\end{proof}

\begin{lemma} \label{lem:weighted-inversion}
	Let $a,\lambda\in\C$ with $\Re\lambda>0$, and let $\Psi\in\Hol(\C)$.  Assume
	\[
	H(w)=\Psi(-w)e^{\alpha(a\bar w-\lambda|w|^2)}
	\in L^2(\C,dA).
	\]
	Then, in $L^2(\C,dA)$ and hence almost everywhere,
	\begin{equation}\label{eq:weighted-inversion-alpha}
		(\Fsa^{-1}H)(u)
		=\frac\alpha{\pi\lambda}
		\exp\!\left[
		\alpha\frac{a\bar u-|u|^2}{\lambda}
		\right]
		\Psi\!\left(\frac{u-a}{\lambda}\right).
	\end{equation}
\end{lemma}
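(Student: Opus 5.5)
The natural approach is to go in the forward direction. Let \(G\) denote the right-hand side of \eqref{eq:weighted-inversion-alpha}. We would show that \(G\in L^1(\C,dA)\cap L^2(\C,dA)\) and that \(\Fsa G=H\) pointwise. The conclusion then follows from Plancherel \eqref{eq:plancherel-alpha}: \(\Fsa\) is injective on \(L^2\), so \(\Fsa^{-1}H=G\) in \(L^2\), hence almost everywhere.

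The computation of \(\Fsa G\) proceeds as follows.

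\emph{Change of variables.} By \eqref{eq:symplectic-fourier-alpha},
\[
(\Fsa G)(w)=\frac{\alpha}{\pi\lambda}\int_\C \exp\!\Bigl[\alpha\tfrac{a\bar u-|u|^2}{\lambda}+\alpha(u\bar w-\bar u w)\Bigr]\Psi\!\Bigl(\tfrac{u-a}{\lambda}\Bigr)dA(u).
\]
We substitute \(u=a+\lambda v\), so that \(dA(u)=|\lambda|^2dA(v)\). The exponent becomes a quadratic expression in \(v,\bar v\). Expanding, \(|u|^2=|a|^2+a\bar\lambda\bar v+\bar a\lambda v+|\lambda|^2|v|^2\), so the Gaussian term is \(-\alpha\frac{|\lambda|^2}{\lambda}|v|^2=-\alpha\bar\lambda|v|^2\). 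This suggests setting \(c=\alpha\bar\lambda\), which satisfies \(\Re c=\alpha\Re\lambda>0\). The remaining terms are linear in \(v\) (giving \(bv\)), linear in \(\bar v\), or constant. The \(a\bar u\) and \(-a\bar\lambda\bar v\) contributions cancel in the \(\bar v\)-coefficient, leaving \(-\alpha\bar\lambda w\bar v\). This coefficient has the form \(cz\bar v\) with \(z=-w\). The \(v\)-coefficient is \(b=\alpha(\lambda\bar w-\bar a)\).

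\emph{Reproducing formula.} We then apply Lemma~\ref{lem:complex-reproducing} with \(f=\Psi\), \(c=\alpha\bar\lambda\), \(z=-w\) and the above \(b\). This gives
\[
\frac{c}{\pi}\int\Psi(v)e^{-c|v|^2+bv+cz\bar v}\,dA(v)=e^{-bw}\Psi(-w).
\]
The prefactors combine as \(\frac{\alpha}{\pi\lambda}|\lambda|^2=\frac{\alpha\bar\lambda}{\pi}=\frac c\pi\), which is exactly right. Collecting the constant exponent together with \(e^{-bw}=e^{-\alpha\lambda|w|^2+\alpha\bar a w}\), the \(\bar a w\) terms should cancel against the constants, leaving \(e^{\alpha(a\bar w-\lambda|w|^2)}\Psi(-w)=H(w)\). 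This algebra is routine bookkeeping, and we would verify it carefully.

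\emph{Main obstacle: integrability.} The substantive difficulty is the hypothesis \eqref{eq:complex-reproducing-integrability}, together with \(G\in L^1\cap L^2\), both deduced from the sole assumption \(H\in L^2\).

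The first step is to convert \(H\in L^2\) into pointwise growth control on \(\Psi\). Since \(\Psi\) is entire, subharmonicity of \(|\Psi|^2\) on discs of fixed radius gives
\[
|\Psi(-w)|^2e^{2\alpha\Re(a\bar w)-2\alpha\Re\lambda|w|^2}\lesssim \text{(local } L^2 \text{ mass)}\cdot \sup_{\text{disc}}(\text{weight ratio}).
\]
The weight \(e^{2\alpha(\Re(a\bar w)-\Re\lambda|w|^2)}\) varies over a unit disc around \(w\) by a factor \(e^{O(|w|)}\). This yields a bound of the form \(|\Psi(v)|\le Ce^{\alpha\Re\lambda|v|^2+C'|v|}\).

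With this bound, however, \eqref{eq:complex-reproducing-integrability} fails marginally, since the integrand is then only bounded by \(e^{C|v|}\). So instead we plan to use the \(L^2\) bound directly. The integrand in \eqref{eq:complex-reproducing-integrability}, written back in terms of \(w\)-type variables, is \(|H|\) times a Gaussian factor \(e^{-\epsilon|v|^2}\)? Let us check: \(|G(u)|\) equals \(|\Psi(v)|e^{-\alpha\Re\lambda|v|^2+\text{linear}}\) times \(e^{\alpha\Re(\cdots)}\). Comparing with \(|H(-v)|=|\Psi(v)|e^{-\alpha\Re\lambda|v|^2+\text{linear}}\), the Gaussian parts agree and only linear exponentials differ. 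Hence \(|G(a+\lambda v)|\) is \(|H(-v)|\) times \(e^{\Re(\ell v)}\) for some \(\ell\), and exponential factors are not automatically harmless.

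The first remedy to try is a density/limiting argument. Replace \(\Psi\) by \(\Psi_\epsilon\) with extra Gaussian decay, for instance by computing \(\Fsa\) of \(G e^{-\epsilon|u|^2}\) through the same lemma with \(c\) replaced by \(c+\alpha\epsilon\)-type parameters. We would then pass to the limit \(\epsilon\to0\), using \(L^2\)-continuity of \(\Fsa\) and the fact that \(H\in L^2\) controls the limit. If that fails, the fallback is to show directly, via Cauchy--Schwarz against \(H\in L^2\) on annuli and the subharmonic pointwise bounds, that the linear exponentials are absorbed. This is the step we expect to need the most care.
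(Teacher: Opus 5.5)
Your forward computation is correct. With $u=a+\lambda v$, $c=\alpha\bar\lambda$, $b=\alpha(\lambda\bar w-\bar a)$ and $z=-w$, Lemma~\ref{lem:complex-reproducing} returns exactly $H(w)$; this is the paper's computation run in the opposite direction. The gap is in the analytic justification, and it is not the obstacle you describe. There is no linear-exponential mismatch. Since $a\bar u-|u|^2=-\lambda v\bar u$,
\[
G(a+\lambda v)=\frac{\alpha}{\pi\lambda}\,\Psi(v)\,e^{-\alpha\bar a v-\alpha\bar\lambda|v|^2},
\qquad
|G(a+\lambda v)|=\frac{\alpha}{\pi|\lambda|}\,|H(-v)|,
\]
so $G\in L^2$ with $\|G\|_{L^2}=\frac{\alpha}{\pi}\|H\|_{L^2}$ for free.

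What this identity exposes is the real obstruction. In your computation, the integrability hypothesis of Lemma~\ref{lem:complex-reproducing} is, up to a constant, $\int|G|\,dA<\infty$, because the Fourier kernel is unimodular. That is equivalent to $H\in L^1$. It does not follow from $H\in L^2$: after completing the square, $H\in L^p$ says that a translate of $\Psi(-\cdot)$ lies in the $p$-Fock space with weight $e^{-p\alpha\Re\lambda|\zeta|^2}$, and the $L^2$ space is strictly larger than the $L^1$ space. So your plan to prove $G\in L^1\cap L^2$ fails in general. Your fallback (absorbing exponentials via Cauchy--Schwarz and subharmonic bounds) would have to prove $H\in L^1$, which is false in general. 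Some regularization is therefore unavoidable.

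Your first remedy does close the gap, given the identity above. Put $G_\epsilon=e^{-\epsilon|u|^2}G$. By Cauchy--Schwarz $G_\epsilon\in L^1\cap L^2$, and by dominated convergence $G_\epsilon\to G$ in $L^2$, so $\Fsa G_\epsilon\to\Fsa G$ in $L^2$. After substituting $u=a+\lambda v$, Lemma~\ref{lem:complex-reproducing} applies (its hypothesis is now exactly $G_\epsilon\in L^1$) with
$c_\epsilon=\alpha\bar\lambda+\epsilon|\lambda|^2$, $b_\epsilon=\alpha(\lambda\bar w-\bar a)-\epsilon\lambda\bar a$ and $z_\epsilon=-\bar\lambda(\alpha w+\epsilon a)/c_\epsilon$. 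This gives
\[
(\Fsa G_\epsilon)(w)=\frac{\alpha\bar\lambda}{c_\epsilon}\,e^{\alpha(a\bar w-\bar a w)-\epsilon|a|^2+b_\epsilon z_\epsilon}\,\Psi(z_\epsilon)\longrightarrow H(w)
\]
pointwise as $\epsilon\to0$. An a.e.-convergent subsequence then gives $\Fsa G=H$, hence $\Fsa^{-1}H=G$.

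The paper regularizes on the other side instead. It completes the square to place $\Psi(-\cdot)$ in a translated, dilated Fock space and approximates it there by polynomials $Q_j$, for which every integral converges trivially. It then computes $\Fsa^{-1}$ of $Q_j e^{\alpha(a\bar w-\lambda|w|^2)}$ by the same lemma with $c=\alpha\lambda$, $b=\alpha\bar u$, $z=(a-u)/\lambda$, and matches the $L^2$ limit with the locally uniform limit. Your damping route avoids both the density of polynomials and the norm-to-locally-uniform step. The paper's route never has to verify $G\in L^2$ separately.
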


\begin{proof}
	Put $
	Q(w)=\Psi(-w)$. 
Then \(Q\) is entire, and the hypothesis is equivalent to
	\begin{equation}\label{eq:weighted-fock-norm-proof}
		\int_\C |Q(w)|^2
		e^{-2\alpha \Re\lambda  |w|^2+2\alpha\Re(a\bar w)}\,dA(w)<\infty.
	\end{equation}
	After completing the square, the Hilbert space of entire functions satisfying
	\eqref{eq:weighted-fock-norm-proof} is obtained from a standard Fock space by a
	translation and a dilation.  Hence holomorphic polynomials are dense in this norm,
	and norm convergence implies locally uniform convergence; see
	\cite[Chapter~2]{MR2934601}.  Choose polynomials $Q_j$ such that  $
	Q_j\to Q$ 
	in the norm defined by \eqref{eq:weighted-fock-norm-proof}, and set
	\[
	H_j(w)=Q_j(w)e^{\alpha(a\bar w-\lambda|w|^2)}.
	\]
	Then $H_j\to H$ in $L^2(\C,dA)$.  By Plancherel for $\Fsa$,
	\[
	\Fsa^{-1}H_j\to \Fsa^{-1}H
	\qquad\text{in }L^2(\C,dA).
	\]
	
	For fixed $u\in\C$, the inverse formula gives
	\[
	(\Fsa^{-1}H_j)(u)
	=\frac{\alpha^2}{\pi^2}
	\int_\C Q_j(w)
	e^{-\alpha\lambda|w|^2
		+\alpha\bar u w
		+\alpha(a-u)\bar w}\,dA(w).
	\]
	
	Applying Lemma~\ref{lem:complex-reproducing} with
	\[
	c=\alpha\lambda,\qquad
	b=\alpha\bar u,\qquad
	z=\frac{a-u}{\lambda},
	\]
	we obtain
	\[
	(\Fsa^{-1}H_j)(u)
	=\frac\alpha{\pi\lambda}
	e^{\alpha(a\bar u-|u|^2)/\lambda}
	Q_j\!\left(\frac{a-u}{\lambda}\right).
	\]
	Since $Q_j\to Q$ locally uniformly, the right-hand side converges locally uniformly
	in $u$ to
	\[
	\frac\alpha{\pi\lambda}
	e^{\alpha(a\bar u-|u|^2)/\lambda}
	Q\!\left(\frac{a-u}{\lambda}\right)
	=
	\frac\alpha{\pi\lambda}
	e^{\alpha(a\bar u-|u|^2)/\lambda}
	\Psi\!\left(\frac{u-a}{\lambda}\right).
	\]
	On the other hand, $\Fsa^{-1}H_j\to\Fsa^{-1}H$ in $L^2(\C,dA)$.  Passing to an
	almost-everywhere convergent subsequence identifies the $L^2$-limit with the above
	pointwise limit.  This proves \eqref{eq:weighted-inversion-alpha}.
\end{proof}

\section{Applications}

\subsection{Weighted composition operators}

For $\psi,\varphi\in\Hol(\C)$, define
\[
W_{\psi,\varphi}h=\psi\cdot(h\circ\varphi).
\]
The following standard boundedness criterion for  
\(W_{\psi,\varphi}\), in the present normalization, is known; see
\cite{Ueki} and also \cite[Theorem~3.7]{BFRR}:
\begin{equation}
W_{\psi,\varphi}\in\mathcal L(\Focka)
	\quad\Longleftrightarrow\quad
	\sup_{z\in\C}
 |u(z)|
	e^{\frac\alpha2(|\varphi(z)|^2-|z|^2)}<\infty.
	\nonumber
\end{equation}
If $\psi\not\equiv0$ and $W_{\psi,\varphi}$ is bounded, then
\begin{equation}\label{eq:weighted-affine}
	\varphi(z)=a+\lambda z,
	\qquad |\lambda|\le1.
\end{equation} 

For $\lambda\ne0$, define the candidate symbol
\begin{equation}\label{eq:weighted-candidate-alpha}
	F_{\psi,a,\lambda}(w)
	=
	\frac1\lambda
	\exp\!\left[
	\alpha\left(
	\frac{\lambda-1}{\lambda}|w|^2+\frac a\lambda\bar w
	\right)\right]
	\psi\!\left(\frac{w-a}{\lambda}\right).
\end{equation}

\begin{proof}[Proof of Theorem~\ref{thm:weighted}]
	The assertion is immediate when \(\psi\equiv0\).  Assume henceforth that
	\(\psi\not\equiv0\).
	
	Suppose first that \(W_{\psi,\varphi}=T_f\) with \(f\in L^\infty(\C,dA)\).
	Then \(W_{\psi,\varphi}\) is bounded.  By the standard boundedness theorem for
	weighted composition operators on Fock spaces, there exist \(a,\lambda\in\C\)
	such that
	\[
	\varphi(z)=a+\lambda z,\qquad |\lambda|\le1.
	\]
	If \(\lambda=0\), then
	\[
	H_0^{W_{\psi,\varphi}}(w)
	=
	\psi(-w)e^{\alpha a\bar w}.
	\]
	By Theorem~\ref{thm:higher-recovery}\textup{(a)},
	\(H_0^{W_{\psi,\varphi}}\in L^2(\C,dA)\).  But
	\[
	Q(w):=\psi(-w)e^{\alpha\bar a w}
	\]
	is entire and satisfies
	\[
	|Q(w)|=|H_0^{W_{\psi,\varphi}}(w)|.
	\]
	Thus \(Q\in L^2(\C,dA)\).  The submean inequality forces \(Q\equiv0\), which
	contradicts \(\psi\not\equiv0\).  Hence \(\lambda\ne0\).
	
	For affine \(\varphi(z)=a+\lambda z\), the canonical kernel is
	\[
	\Kern_{W_{\psi,\varphi}}(w,z)
	=
	\psi(z)e^{\alpha(a+\lambda z)\bar w}.
	\]
	Consequently,
	\begin{equation}\label{eq:weighted-fourier-alpha}
		H_0^{W_{\psi,\varphi}}(w)
		=
		\psi(-w)e^{\alpha(a\bar w-\lambda|w|^2)}
		=
		:H(w).
	\end{equation}
	Again by Theorem~\ref{thm:higher-recovery}\textup{(a)}, \(H\in L^2(\C,dA)\).
	Define
	\[
	Q(w)=\psi(-w)e^{\alpha\bar a w}.
	\]
	Then \(Q\) is entire and
	\[
	|H(w)|=|Q(w)|e^{-\alpha(\Re\lambda)|w|^2}.
	\]
	If \(\Re\lambda\le0\), then \(|Q|\le |H|\), so \(Q\in L^2(\C,dA)\).  The
	submean inequality forces \(Q\equiv0\), a contradiction.  Hence $
	\Re\lambda>0$. 
	
	Lemma~\ref{lem:weighted-inversion}, applied to
	\eqref{eq:weighted-fourier-alpha}, gives
	\[
	(\Fsa^{-1}H)(u)
	=
	\frac\alpha{\pi\lambda}
	\exp\!\left[
	\alpha\frac{a\bar u-|u|^2}{\lambda}
	\right]
	\psi\!\left(\frac{u-a}{\lambda}\right).
	\]
	Formula \eqref{eq:recovered-symbol-zero} yields
	\[
	f(u)=F_{\psi,a,\lambda}(u)
	\quad\text{a.e.}
	\]
	In particular, \(F_{\psi,a,\lambda}\in L^\infty(\C,dA)\).
	
	Substituting \(w=a+\lambda z\) in \eqref{eq:weighted-candidate-alpha}, we get
	\begin{equation}\label{eq:weighted-modulus-alpha}
		|F_{\psi,a,\lambda}(a+\lambda z)|
		=
		\frac{e^{\alpha|a|^2}}{|\lambda|}|\psi(z)|
		\exp\!\left[
		\alpha\left(
		(|\lambda|^2-\Re\lambda)|z|^2
		+\Re((2\lambda-1)\bar a z)
		\right)\right].
	\end{equation}
	If \(\Re\lambda<|\lambda|^2\), then the entire function
	\[
	z\longmapsto
	\psi(z)e^{\alpha(2\lambda-1)\bar a z}
	\]
	is bounded by \(Ce^{-\delta|z|^2}\) for some \(\delta>0\), and hence vanishes
	identically.  This contradicts \(\psi\not\equiv0\).  Therefore $
	\Re\lambda\ge|\lambda|^2$.  
	Now \eqref{eq:weighted-modulus-alpha} gives precisely
	\eqref{eq:weighted-multiplier-condition-alpha}.  This proves necessity.
	
	Conversely, assume that
	\[
	\varphi(z)=a+\lambda z,\qquad \lambda\ne0,
	\]
	and that \eqref{eq:weighted-disk-alpha} and
	\eqref{eq:weighted-multiplier-condition-alpha} hold.  Put $
	F=F_{\psi,a,\lambda}$. 
	By \eqref{eq:weighted-modulus-alpha}, the condition
	\eqref{eq:weighted-multiplier-condition-alpha} is exactly the boundedness of
	\(F\). Hence \(F\in L^\infty(\C,dA)\).
	
	We first check that \(W_{\psi,\varphi}\) is bounded.  Let \(M\) denote the
	supremum in \eqref{eq:weighted-multiplier-condition-alpha}.  Then
	\[
	|\psi(z)|
	\le
	M
	\exp\!\left[
	\alpha\left(
	(\Re\lambda-|\lambda|^2)|z|^2
	-\Re((2\lambda-1)\bar a z)
	\right)\right].
	\]
	Therefore
	\begin{align*}
		|\psi(z)|
		e^{\frac{\alpha}{2}(|a+\lambda z|^2-|z|^2)}
		&\le
		M e^{\frac{\alpha}{2}|a|^2}
		\exp\!\left[
		-\frac{\alpha}{2}|1-\lambda|^2|z|^2
		+\alpha\Re((1-\lambda)\bar a z)
		\right].
	\end{align*}
	The right-hand side is bounded on \(\C\). Hence
	\(W_{\psi,\varphi}\in\mathcal L(\Focka)\).
	
	Its anti-diagonal function is
	\[
	H(w)=H_0^{W_{\psi,\varphi}}(w)
	=
	\psi(-w)e^{\alpha(a\bar w-\lambda|w|^2)}.
	\]
	Using \eqref{eq:weighted-multiplier-condition-alpha} with \(z=-w\), we obtain
	\[
	|H(w)|
	\le
	M
	\exp\!\left[
	-\alpha|\lambda|^2|w|^2
	+\alpha\Re(2\lambda\bar a w)
	\right].
	\]
	Since \(\lambda\ne0\), this shows that \(H\in L^2(\C,dA)\).  Moreover,
	\eqref{eq:weighted-disk-alpha} implies \(\Re\lambda>0\).  Lemma
	\ref{lem:weighted-inversion} gives
	\[
	(\Fsa^{-1}H)(u)
	=
	\frac\alpha{\pi\lambda}
	\exp\!\left[
	\alpha\frac{a\bar u-|u|^2}{\lambda}
	\right]
	\psi\!\left(\frac{u-a}{\lambda}\right).
	\]
	Consequently,
	\[
	\frac{\pi}{\alpha}e^{\alpha|u|^2}
	(\Fsa^{-1}H)(u)
	=
	F_{\psi,a,\lambda}(u).
	\]
	Theorem~\ref{thm:higher-recovery}\textup{(a)} now gives
	\[
	W_{\psi,\varphi}=T_{F_{\psi,a,\lambda}}.
	\]
	
	The uniqueness of the bounded symbol follows from
	Theorem~\ref{thm:higher-recovery}.
\end{proof}

\begin{corollary} \label{cor:composition}
	Let $\varphi\in\Hol(\C)$.  Then $C_\varphi=T_f$ for some
	$f\in L^\infty(\C,dA)$ if and only if
	\[
	\varphi(z)=a+\lambda z,
	\qquad \lambda\ne0,
	\]
	and either
	\[
	\Re\lambda>|\lambda|^2,
	\]
	or
	\[
	\Re\lambda=|\lambda|^2
	\quad\text{and}\quad a=0.
	\]
	The unique bounded symbol is
	\begin{equation}\label{eq:composition-symbol-alpha}
		f_{a,\lambda}(w)
		=
		\frac1\lambda
		\exp\!\left[
		\alpha\left(
		\frac{\lambda-1}{\lambda}|w|^2+\frac a\lambda\bar w
		\right)\right]
		\quad\text{a.e.}
	\end{equation}
\end{corollary}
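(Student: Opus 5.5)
This is the special case \(\psi\equiv1\) of Theorem~\ref{thm:weighted}, since \(C_\varphi=W_{1,\varphi}\). We apply that theorem with \(\psi\equiv1\neq0\) and reduce condition~(ii) to the stated dichotomy. For \(\psi\equiv 0\) excluded, Theorem~\ref{thm:weighted} says that \(C_\varphi=T_f\) with \(f\in L^\infty\) if and only if three things hold: \(\varphi(z)=a+\lambda z\) with \(\lambda\neq0\); \(\Re\lambda\ge|\lambda|^2\); and
\[
\sup_{z\in\C}\exp\!\left[\alpha\left(-(\Re\lambda-|\lambda|^2)|z|^2+\Re((2\lambda-1)\bar a z)\right)\right]<\infty .
\]
In that case the symbol is \(F_{1,a,\lambda}\), which is exactly \eqref{eq:composition-symbol-alpha}. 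Uniqueness a.e. is inherited from Theorem~\ref{thm:higher-recovery}.

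It remains to analyse the supremum, which is a purely elementary step. Write \(\delta=\Re\lambda-|\lambda|^2\ge0\) and \(\beta=(2\lambda-1)\bar a\). The exponent is \(-\alpha\delta|z|^2+\alpha\Re(\beta z)\).

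If \(\delta>0\), this is a negative-definite quadratic plus a linear term, so it is bounded above and the condition holds automatically.

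If \(\delta=0\), the exponent is the real-linear function \(\alpha\Re(\beta z)\). It is bounded above on \(\C\) if and only if \(\beta=0\), that is, \(a=0\) or \(\lambda=\tfrac12\). The case \(\lambda=\frac12\) needs a check: then \(\Re\lambda=\frac12\) but \(|\lambda|^2=\frac14\), so \(\delta=\tfrac14\neq0\). Hence on the circle \(\Re\lambda=|\lambda|^2\) we have \(2\lambda-1\neq0\). Indeed, \(\lambda=\tfrac12\) would give \(\tfrac12=\tfrac14\), which is false. So on that circle the condition is equivalent to \(a=0\).

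Combining the two cases gives exactly the stated alternatives, \(\Re\lambda>|\lambda|^2\), or \(\Re\lambda=|\lambda|^2\) together with \(a=0\).

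There is no real obstacle. The only point needing care is excluding the degenerate coefficient \(2\lambda-1=0\) on the boundary circle, which the computation above handles.
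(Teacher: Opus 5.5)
Your proposal is correct and follows the paper's proof, which simply applies Theorem~\ref{thm:weighted} with \(\psi\equiv1\). You additionally spell out the elementary reduction of the supremum condition \eqref{eq:weighted-multiplier-condition-alpha}, correctly noting that on the circle \(\Re\lambda=|\lambda|^2\) one has \(2\lambda-1\neq0\), so the condition there forces \(a=0\).
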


\begin{proof}
	Apply Theorem~\ref{thm:weighted} with $\psi\equiv1$.  
\end{proof}

\subsection{Generalized Volterra--type operators}

For $g,\varphi\in\Hol(\C)$, define
\begin{equation}\label{eq:volterra-definition}
	V_{(g,\varphi)}h(z)
	=
	\int_0^z h(\varphi(\zeta))g'(\zeta)\,d\zeta.
\end{equation}
Equivalently,
\[
V_{(g,\varphi)}=V_g C_\varphi.
\]
The following standard boundedness criterion for the product \(V_gC_\varphi\),
in the present normalization, is known; see \cite{MengestieWorku,MR3192295} and also
\cite[Theorem~3.21]{BFRR}:
\begin{equation}\label{eq:volterra-boundedness-criterion-alpha}
	V_{(g,\varphi)}\in\mathcal L(\Focka)
	\quad\Longleftrightarrow\quad
	\sup_{z\in\C}
	\frac{|g'(z)|}{1+|z|}
	e^{\frac\alpha2(|\varphi(z)|^2-|z|^2)}<\infty.
\end{equation}
If $g'\not\equiv0$ and $V_{(g,\varphi)}$ is bounded, then
\begin{equation}\label{eq:volterra-affine}
	\varphi(z)=a+\lambda z,
	\qquad |\lambda|\le1.
\end{equation} 

For \(a,\lambda\in\C\), \(\lambda\ne0\), define the following a.e. symbol on
\(\C\):
\begin{equation}\label{eq:affine-volterra-candidate-alpha}
	\Sigma_{a,\lambda,g}(w)
	=
	\frac{1}{\alpha\lambda\bar w}
	\exp\!\left[
	\alpha\left(
	\frac{\lambda-1}{\lambda}|w|^2+\frac a\lambda\bar w
	\right)\right]
	g'\!\left(\frac{w-a}{\lambda}\right).
\end{equation}
For $a=0$, write
\begin{equation}\label{eq:centered-volterra-candidate-alpha}
	\sigma_{\lambda,g}(w)
	=
	\frac{1}{\alpha\lambda\bar w}
	\exp\!\left[
	\alpha\frac{\lambda-1}{\lambda}|w|^2
	\right]
	g'\!\left(\frac w\lambda\right).
\end{equation} 

\begin{lemma} \label{lem:affine-volterra-recovery}
	Let $g'\not\equiv0$, $a\in\C$, and $\lambda\ne0$.  If $
	V_{(g,a+\lambda z)}=T_f$ 
	for some $f\in L^\infty(\C,dA)$, then $\Re\lambda>0$ and
	\[
	f=\Sigma_{a,\lambda,g}
	\quad\text{a.e.}
	\]
\end{lemma}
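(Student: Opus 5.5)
We use the first-order recovery of Theorem~\ref{thm:higher-recovery}\textup{(b)}, combined with the Fourier inversion of Lemma~\ref{lem:weighted-inversion}.

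\emph{Step 1: the first-order anti-diagonal function.} For $\varphi(z)=a+\lambda z$ the canonical kernel is
\[
\Kern_{V}(w,z)=\int_0^z e^{\alpha(a+\lambda\zeta)\bar w}g'(\zeta)\,d\zeta ,
\]
since $V_{(g,\varphi)}K_w(z)=\int_0^z K_w(\varphi(\zeta))g'(\zeta)\,d\zeta$. Differentiating in $z$ and setting $z=-w$ gives
\[
H_1^{V}(w)=g'(-w)\,e^{\alpha(a\bar w-\lambda|w|^2)} .
\]
This has exactly the form treated in Lemma~\ref{lem:weighted-inversion}, with $\Psi=g'$.

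\emph{Step 2: $\Re\lambda>0$.} Since $V=T_f$, the necessity part of Theorem~\ref{thm:higher-recovery} gives $H_1^V\in L^2(\C,dA)$. Set $Q(w)=g'(-w)e^{\alpha\bar a w}$. Then $Q$ is entire and
\[
|H_1^V(w)|=|Q(w)|\,e^{-\alpha(\Re\lambda)|w|^2}.
\]
If $\Re\lambda\le0$, then $|Q|\le|H_1^V|$, so $Q\in L^2(\C,dA)$. By the submean value inequality an entire function in $L^2(\C,dA)$ vanishes, so $Q\equiv0$. This contradicts $g'\not\equiv0$. Hence $\Re\lambda>0$.

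\emph{Step 3: inversion and identification of $f$.} With $\Re\lambda>0$ and $H_1^V\in L^2$, Lemma~\ref{lem:weighted-inversion} (with $\Psi=g'$) gives, almost everywhere,
\[
(\Fsa^{-1}H_1^V)(u)=\frac{\alpha}{\pi\lambda}\exp\!\Bigl[\alpha\frac{a\bar u-|u|^2}{\lambda}\Bigr]g'\!\Bigl(\frac{u-a}{\lambda}\Bigr).
\]
On the other hand, the necessity part of Theorem~\ref{thm:higher-recovery} gives, for a.e.\ $u\neq0$,
\[
f(u)=f_V^{(1)}(u)=\frac{\pi}{\alpha^2\bar u}\,e^{\alpha|u|^2}(\Fsa^{-1}H_1^V)(u).
\]
Substituting the inversion formula, the prefactor becomes $\frac{1}{\alpha\lambda\bar u}$. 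The exponent becomes
\[
\alpha\Bigl(|u|^2-\tfrac{|u|^2}{\lambda}+\tfrac{a\bar u}{\lambda}\Bigr)=\alpha\Bigl(\tfrac{\lambda-1}{\lambda}|u|^2+\tfrac a\lambda\bar u\Bigr).
\]
This is exactly $\Sigma_{a,\lambda,g}(u)$. Since $\{0\}$ is a null set, $f=\Sigma_{a,\lambda,g}$ a.e.

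\emph{Main obstacle.} The only delicate point is the direct first-order identity $\Fsa^{-1}H_1^{T_f}=\frac{\alpha^2}{\pi}\bar u f e^{-\alpha|u|^2}$. It is needed because the zeroth-order approach does not apply cleanly here: $H_0^V$ involves $\int_0^{-w}$ rather than a pointwise value of $g'$. This identity is already established as \eqref{eq:H1Toeplitz} in the proof of Theorem~\ref{thm:higher-recovery}. The remaining work is the algebra of exponents above, so the argument is essentially bookkeeping.
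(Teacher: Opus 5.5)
Your proposal is correct and follows the same route as the paper. You compute $H_1^{V}(w)=g'(-w)e^{\alpha(a\bar w-\lambda|w|^2)}$, use $H_1^V\in L^2(\C,dA)$ (from the necessity part of Theorem~\ref{thm:higher-recovery}\textup{(b)}) with the submean inequality for $g'(-w)e^{\alpha\bar a w}$ to force $\Re\lambda>0$, and then invert via Lemma~\ref{lem:weighted-inversion} with $\Psi=g'$ to identify $f=f^{(1)}_V=\Sigma_{a,\lambda,g}$ a.e., with the exponent bookkeeping done correctly.
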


\begin{proof}
	For $w,z\in\C$,
	\[
	\Kern_{V_{(g,a+\lambda z)}}(w,z)
	=
	\int_0^z e^{\alpha(a+\lambda\zeta)\bar w}g'(\zeta)\,d\zeta,
	\]
	and hence
	\begin{equation}\label{eq:affine-volterra-kernel-derivative-alpha}
		\partial_z\Kern_{V_{(g,a+\lambda z)}}(w,z)
		=
		g'(z)e^{\alpha(a+\lambda z)\bar w}.
	\end{equation}
	Therefore
	\begin{equation}\label{eq:affine-volterra-fourier-alpha}
		H_1^{V_{(g,a+\lambda z)}}(w)
		=
		g'(-w)e^{\alpha(a\bar w-\lambda|w|^2)}
		=
		: H_1(w).
	\end{equation}
	By Theorem~\ref{thm:higher-recovery}\textup{(b)}, applied to $T_f$, one has
	$H_1\in L^2(\C,dA)$.  If $\Re\lambda\le0$, then
	\[
	g'(-w)e^{\alpha\bar a w}\in L^2(\C,dA),
	\]
	and the submean inequality forces this entire function to vanish identically, a
	contradiction.  Thus $\Re\lambda>0$.
	
	Lemma~\ref{lem:weighted-inversion}, with $\Psi=g'$, gives
	\[
	(\Fsa^{-1}H_1)(u)
	=
	\frac\alpha{\pi\lambda}
	e^{\alpha(a\bar u-|u|^2)/\lambda}
	g'\!\left(\frac{u-a}{\lambda}\right).
	\]
	By \eqref{eq:first-order-recovered-symbol}, the first-order recovered symbol is
	\[
	f_{V_{(g,a+\lambda z)}}^{(1)}(u)
	=
	\frac{1}{\alpha\lambda\bar u}
	\exp\!\left[
	\alpha\left(
	\frac{\lambda-1}{\lambda}|u|^2+\frac a\lambda\bar u
	\right)\right]
	g'\!\left(\frac{u-a}{\lambda}\right)
	\]
	for a.e. \(u\in\C\setminus\{0\}\). 
	Since \(V_{(g,a+\lambda z)}=T_f\), Theorem~\ref{thm:higher-recovery}
	\textup{(b)} and \eqref{eq:affine-volterra-candidate-alpha} give  
	\[
	f=\Sigma_{a,\lambda,g}
	\qquad\text{a.e.}
	\]
\end{proof}

\begin{lemma} \label{lem:volterra-origin}
	Let $g'\not\equiv0$, $a\in\C$, and $\lambda\ne0$.  Assume $
	\Sigma_{a,\lambda,g}\in L^\infty(\C,dA)$. 
	Then
	\begin{equation}\label{eq:affine-zero-condition-alpha}
		g'\!\left(-\frac a\lambda\right)=0,
		\qquad
		\Re\lambda\ge|\lambda|^2>0.
	\end{equation}
	Moreover, for every \(h\in\Focka\),
	\begin{equation}\label{eq:origin-evaluation-formula-alpha}
		T_{\Sigma_{a,\lambda,g}}h(0)
		=
		\frac1\lambda\int_0^a
		g'\!\left(\frac{t-a}{\lambda}\right)h(t)\,dt.
	\end{equation}
	Here the integral is taken over the straight line segment from \(0\) to \(a\).
	Since the integrand is entire in \(t\), the value is path independent.
\end{lemma}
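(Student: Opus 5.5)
The plan has two parts: the pointwise conditions \eqref{eq:affine-zero-condition-alpha} come from the growth of $\Sigma_{a,\lambda,g}$ along the affine image $w=a+\lambda z$, and the evaluation formula \eqref{eq:origin-evaluation-formula-alpha} comes from integrating Lemma~\ref{lem:complex-reproducing} along the segment from $0$ to $a$.

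\emph{Step 1: the pointwise conditions.} I substitute $w=a+\lambda z$ in \eqref{eq:affine-volterra-candidate-alpha}, exactly as in \eqref{eq:weighted-modulus-alpha}. This gives
\[
|\Sigma_{a,\lambda,g}(a+\lambda z)|
=\frac{e^{\alpha|a|^2}}{\alpha|\lambda|\,|a+\lambda z|}\,|g'(z)|
\exp\!\left[\alpha\left((|\lambda|^2-\Re\lambda)|z|^2+\Re((2\lambda-1)\bar a z)\right)\right].
\]
The three conditions then follow in turn.
\begin{enumerate}[label=\textup{(\roman*)}]
\item Near $z=-a/\lambda$ (that is, $w=0$) the factor $1/|a+\lambda z|$ blows up like $1/|w|$ while the remaining factors are continuous. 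So if $g'(-a/\lambda)\neq0$, then $\Sigma$ is essentially unbounded near $w=0$. Hence $g'(-a/\lambda)=0$.
\item Suppose $\Re\lambda<|\lambda|^2$. Boundedness of $\Sigma$ gives that the entire function $g'(z)e^{\alpha(2\lambda-1)\bar a z}$ is $O\bigl((1+|z|)e^{-\delta|z|^2}\bigr)$ for some $\delta>0$. By Liouville it vanishes, contradicting $g'\not\equiv0$.
\item Therefore $\Re\lambda\ge|\lambda|^2>0$.
\end{enumerate}

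\emph{Step 2: the evaluation formula.} Put $c=\alpha/\lambda$ and $G(v)=g'\bigl(\tfrac{v-a}{\lambda}\bigr)h(v)$, which is entire with $G(0)=0$ by Step 1. Writing out the definition of $T_\Sigma h(0)$ gives
\[
T_{\Sigma_{a,\lambda,g}}h(0)=\frac{1}{\pi\lambda}\int_\C \frac{G(v)}{\bar v}\,e^{-c|v|^2}e^{ca\bar v}\,dA(v).
\]
The key identity I will use is
\[
\frac{e^{ca\bar v}-1}{\bar v}=c\,a\int_0^1 e^{csa\bar v}\,ds .
\]
Applying Lemma~\ref{lem:complex-reproducing} with $b=0$, complex parameter $c$ (note $\Re c=\alpha\Re\lambda/|\lambda|^2\ge\alpha>0$) and point $z=sa$, then integrating over $s\in[0,1]$, gives
\[
\frac1\pi\int_\C G(v)e^{-c|v|^2}\,\frac{e^{ca\bar v}-1}{\bar v}\,dA(v)=a\int_0^1G(sa)\,ds=\int_0^aG(t)\,dt .
\]
It remains to remove the $-1$ term. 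I need $\int_\C G(v)e^{-c|v|^2}\bar v^{-1}\,dA(v)=0$. This holds by polar coordinates: the angular integral of $v^n/\bar v$ vanishes for every $n\ge0$, since it is $\int_0^{2\pi}e^{i(n+1)\theta}\,d\theta$. When $a=0$ both sides of \eqref{eq:origin-evaluation-formula-alpha} vanish by this same angular argument. Dividing by $\lambda$ yields \eqref{eq:origin-evaluation-formula-alpha}, and path independence is clear because $G$ is entire.

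\emph{Main obstacle: justifying Fubini and the termwise use of Lemma~\ref{lem:complex-reproducing}.} The bound I will aim for is
\[
\int_0^1\!\int_\C |G(v)|\,e^{-\Re c|v|^2+\Re(csa\bar v)}\,dA(v)\,ds<\infty,
\]
which is the hypothesis \eqref{eq:complex-reproducing-integrability} with $b=0$ at the points $z=sa$, uniformly in $s$. To get it I will use $e^{\Re(csa\bar v)}\le \max\bigl(1,e^{\Re(ca\bar v)}\bigr)$ and treat the two cases separately.
\begin{itemize}
\item For the term with $e^{\Re(ca\bar v)}$: the definition of $\Sigma$ gives
\[
|G(v)|\,e^{-\Re c|v|^2+\Re(ca\bar v)}=\alpha|\lambda|\,|v|\,|\Sigma(v)|\,|h(v)|\,e^{-\alpha|v|^2},
\]
and this is integrable because $\Sigma$ is bounded and $h\in\Focka$.
\item For the term with $1$: I need $|G|e^{-\Re c|v|^2}$ integrable, and this is the delicate point. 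The bound on $\Sigma$ only controls $|g'((v-a)/\lambda)|e^{-\Re c|v|^2}$ up to the factor $e^{-\Re(ca\bar v)}$, which can grow. So I first try to absorb that factor using the strict excess $\Re c-\alpha\ge0$ together with the Gaussian decay of $h$. If this fails when $\Re c=\alpha$, I fall back on an approximation: establish the identity for polynomial $h$, then pass to the limit using continuity of $h\mapsto T_\Sigma h(0)$ and of $h\mapsto\int_0^aG(t)\,dt$.
\end{itemize}
The local singularity $1/|\bar v|$ at the origin is harmless, since it is integrable in two real dimensions.
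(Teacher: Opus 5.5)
Your proposal is correct and is essentially the paper's argument. The paper defines $I_h(\eta)=\frac{1}{\pi\lambda}\int_\C \frac{R(w)}{\bar w}e^{-\alpha|w|^2/\lambda+\alpha\eta\bar w/\lambda}\,dA(w)$, differentiates under the integral sign and applies Lemma~\ref{lem:complex-reproducing} to get $I_h'(\eta)=R(\eta)/\lambda$, kills $I_h(0)$ by the same angular-frequency argument, and integrates from $0$ to $a$; your identity $(e^{ca\bar v}-1)/\bar v=ca\int_0^1e^{csa\bar v}\,ds$ combined with Fubini is the same computation.

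The integrability you flag as delicate is immediate, and no polynomial approximation is needed. Indeed, $|G(v)|e^{-\Re c|v|^2}=\alpha|\lambda|\,|v|\,|\Sigma_{a,\lambda,g}(v)|\,|h(v)|\,e^{-\alpha|v|^2-\Re(ca\bar v)}$, and Cauchy--Schwarz against $|h|e^{-\alpha|v|^2/2}\in L^2$ absorbs the linear exponential into the leftover Gaussian $e^{-\alpha|v|^2/2}$. This is exactly how the paper obtains absolute convergence locally uniformly in $\eta$.
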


\begin{proof}
	The function in \eqref{eq:affine-volterra-candidate-alpha} is continuous on
	$\C\setminus\{0\}$.  If $g'(-a/\lambda)\ne0$, then its modulus is bounded below
	by a positive multiple of $|w|^{-1}$ on a punctured neighborhood of the origin,
	contradicting essential boundedness.  Hence $g'(-a/\lambda)=0$.
	
	Substituting $w=a+\lambda v$ in
	\eqref{eq:affine-volterra-candidate-alpha} gives
	\begin{align}
		|\Sigma_{a,\lambda,g}(a+\lambda v)|
		&=
		\frac{e^{\alpha|a|^2}}
		{\alpha|\lambda|\,|a+\lambda v|}|g'(v)|
		\notag 
		\exp\!\left[
		\alpha\left(
		(|\lambda|^2-\Re\lambda)|v|^2
		+\Re((2\lambda-1)\bar a v)
		\right)\right].
		\label{eq:affine-volterra-modulus-alpha}
	\end{align}
	If $\Re\lambda<|\lambda|^2$, then the entire function $
	g'(v)e^{\alpha(2\lambda-1)\bar a v}$ 
	is bounded by $C(1+|v|)e^{-\delta|v|^2}$ for some $\delta>0$.  It is therefore
	bounded and tends to zero at infinity, so Liouville's theorem forces it to vanish
	identically.  This contradicts $g'\not\equiv0$.  Thus
	$\Re\lambda\ge|\lambda|^2>0$.
	
	Fix $h\in\Focka$ and put
	\[
	R(w)=g'\!\left(\frac{w-a}{\lambda}\right)h(w).
	\]
	Then $R$ is entire and $R(0)=0$.  For $\eta\in\C$, define
	\[
	I_h(\eta)
	=
	\frac1{\pi\lambda}\int_\C
	\frac{R(w)}{\bar w}
	e^{-\alpha|w|^2/\lambda+\alpha\eta\bar w/\lambda}\,dA(w).
	\]
	The integral is locally uniformly absolutely convergent in $\eta$.  Indeed, using
	\eqref{eq:affine-volterra-candidate-alpha}, its absolute integrand is bounded by a
	constant times
	\[
	|\Sigma_{a,\lambda,g}(w)|\,|h(w)|
	e^{-\alpha|w|^2+
		\Re(\alpha(\eta-a)\bar w/\lambda)},
	\]
	which is integrable by Cauchy--Schwarz, locally uniformly for $\eta$ in compact
	sets.  Differentiation under the integral sign gives
	\[
	I_h'(\eta)
	=
	\frac\alpha{\pi\lambda^2}\int_\C
	R(w)e^{-\alpha|w|^2/\lambda+\alpha\eta\bar w/\lambda}\,dA(w).
	\]
	Apply Lemma~\ref{lem:complex-reproducing} with
	$c=\alpha/\lambda$, $b=0$, and $f=R$.  Since
	$\Re(1/\lambda)>0$, we obtain
	\begin{equation}\label{eq:I-derivative-alpha}
		I_h'(\eta)=\frac1\lambda R(\eta).
	\end{equation}
	
	To compute $I_h(0)$, write $R(w)=wS(w)$ with $S$ entire.  On each circle, all
	angular Fourier frequencies of
	\[
	\frac w{\bar w}S(w)e^{-\alpha|w|^2/\lambda}
	\]
	are strictly positive.  Absolute convergence permits Fubini's theorem, and hence
	$I_h(0)=0$.  Integrating \eqref{eq:I-derivative-alpha} from $0$ to $a$ gives
	\[
	I_h(a)=\frac1\lambda\int_0^aR(t)\,dt.
	\]
	Finally, the factors $\alpha/\pi$ in the Toeplitz integral and $1/\alpha$ in
	the candidate symbol cancel, so
	\[
	I_h(a)=T_{\Sigma_{a,\lambda,g}}h(0).
	\]
\end{proof}

\begin{proof}[Proof of Theorem~\ref{thm:volterra-main}]
	Assume first that \(V_{(g,\varphi)}=T_f\) with \(f\in L^\infty(\C,dA)\).  If
	\(g\) is constant, then \(V_{(g,\varphi)}=0\).  Assume now that
	\(g'\not\equiv0\).  Since \(V_{(g,\varphi)}\) is bounded, the standard
	boundedness theorem for generalized Volterra--composition operators on Fock
	spaces gives
	\[
	\varphi(z)=a+\lambda z,\qquad |\lambda|\le1.
	\]
	If \(\lambda=0\), then
	\[
	H_1^{V_{(g,\varphi)}}(w)
	=
	g'(-w)e^{\alpha a\bar w}.
	\]
	By Theorem~\ref{thm:higher-recovery}\textup{(b)},
	\(H_1^{V_{(g,\varphi)}}\in L^2(\C,dA)\).  But
	\[
	Q(w):=g'(-w)e^{\alpha\bar a w}
	\]
	is entire and satisfies
	\[
	|Q(w)|=|H_1^{V_{(g,\varphi)}}(w)|.
	\]
	Thus \(Q\in L^2(\C,dA)\).  The submean inequality forces \(Q\equiv0\), which
	contradicts \(g'\not\equiv0\).  Hence \(\lambda\ne0\).
	
	Lemma~\ref{lem:affine-volterra-recovery} gives
	\[
	f=\Sigma_{a,\lambda,g}
	\qquad\text{a.e.}
	\]
	In particular, \(\Sigma_{a,\lambda,g}\in L^\infty(\C,dA)\).  Since
	\(V_{(g,\varphi)}h(0)=0\), Lemma~\ref{lem:volterra-origin} yields
	\begin{equation}\label{eq:all-test-functions-alpha}
		\int_0^a g'\!\left(\frac{t-a}{\lambda}\right)h(t)\,dt=0,
		\qquad h\in\Focka.
	\end{equation}
	We claim that \(a=0\).  Suppose, to the contrary, that \(a\ne0\).  Taking
	\(h(t)=p(t/a)\), where \(p\) is an arbitrary polynomial, and using the
	straight-line parametrization \(t=as\), \(0\le s\le1\), we obtain
	\[
	\int_0^1
	g'\!\left(\frac{a(s-1)}{\lambda}\right)p(s)\,ds=0
	\]
	for every polynomial \(p\).  By the Weierstrass approximation theorem, the same
	identity holds for every continuous function on \([0,1]\).  Applying this to
	continuous functions approximating
	\[
	\overline{g'\!\left(\frac{a(s-1)}{\lambda}\right)}
	\]
	uniformly on \([0,1]\), we get
	\[
	\int_0^1
	\left|
	g'\!\left(\frac{a(s-1)}{\lambda}\right)
	\right|^2\,ds=0.
	\]
	Thus \(g'\) vanishes on the  line segment
	\[
	\left\{
	\frac{a(s-1)}{\lambda}:0\le s\le1
	\right\}.
	\]
	By the identity theorem, \(g'\equiv0\), a contradiction.  Hence \(a=0\).
	Therefore
	\[
	\varphi(z)=\lambda z,
	\qquad
	f=\sigma_{\lambda,g}
	\quad\text{a.e.}
	\]
	The automatic conditions
	\[
	g'(0)=0,\qquad \Re\lambda\ge|\lambda|^2>0
	\]
	follow from Lemma~\ref{lem:volterra-origin} with \(a=0\).
	
	Conversely, suppose that \(g'\not\equiv0\),
	\[
	\varphi(z)=\lambda z,\qquad \lambda\ne0,
	\]
	and $\sigma_{\lambda,g}\in L^\infty(\C,dA)$.
	 
	Lemma~\ref{lem:volterra-origin}, with \(a=0\), gives
	\[
	g'(0)=0,\qquad \Re\lambda\ge|\lambda|^2>0.
	\]
	From \eqref{eq:centered-volterra-candidate-alpha}, we obtain
	\begin{equation}\label{g'}
	|g'(z)|
\le
C|z|e^{\alpha(\Re\lambda-|\lambda|^2)|z|^2}.
	\end{equation}
	Consequently,
	\begin{align*}
		\frac{|g'(z)|}{1+|z|}
		e^{\frac\alpha2(|\lambda z|^2-|z|^2)}
		&\le
		C\exp\!\left[
		\frac\alpha2(2\Re\lambda-|\lambda|^2-1)|z|^2
		\right]   =
		C e^{-\frac\alpha2|1-\lambda|^2|z|^2}.
	\end{align*}
	Thus \(V_{(g,\lambda z)}\) is bounded by
	\eqref{eq:volterra-boundedness-criterion-alpha}.
	
	Its first-order anti-diagonal restriction is
	\[
	H_1(w)=g'(-w)e^{-\alpha\lambda|w|^2}.
	\]
	The  growth estimate \eqref{g'} gives
	\[
	|H_1(w)|
	\le
	C|w|e^{-\alpha|\lambda|^2|w|^2},
	\]
	so \(H_1\in L^2(\C,dA)\).  Lemma~\ref{lem:weighted-inversion}, with \(a=0\) and
	\(\Psi=g'\), gives
	\[
	(\Fsa^{-1}H_1)(u)
	=
	\frac{\alpha}{\pi\lambda}
	e^{-\alpha|u|^2/\lambda}
	g'\!\left(\frac u\lambda\right).
	\]
	Therefore, by \eqref{eq:first-order-recovered-symbol},
	\[
	f_{V_{(g,\lambda z)}}^{(1)}(u)
	=
	\sigma_{\lambda,g}(u)
	\qquad\text{for a.e. }u\in\C\setminus\{0\}.
	\]
	Moreover,
	\[
	V_{(g,\lambda z)}h(0)=0,
	\]
	and Lemma~\ref{lem:volterra-origin}, with \(a=0\), gives
	\[
	T_{\sigma_{\lambda,g}} h(0)=0.
	\]
	Thus  the origin functional identity in
	Theorem~\ref{thm:higher-recovery}\textup{(b)} holds. Hence
	\[
	V_{(g,\lambda z)}=T_{\sigma_{\lambda,g}}.
	\]
	The case \(g\) constant is trivial, since \(V_{(g,\varphi)}=0\).
\end{proof}
 
\begin{corollary}\label{cor:classical-volterra}
	Let
	\[
	V_gh(z)=\int_0^z h(\zeta)g'(\zeta)\,d\zeta.
	\]
	Then $
	V_g=T_f$ for some $f\in L^\infty(\C,dA)$
 	if and only if $g(z)=a+cz^2$.  In this case
	\[
	f(z)=\frac{2c}{\alpha}\frac z{\bar z},
	\qquad z\ne0.
	\]  
\end{corollary}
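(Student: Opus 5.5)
The plan is to apply Theorem~\ref{thm:volterra-main} with \(\varphi(z)=z\), so that \(V_g=V_{(g,\mathrm{id})}\) and \(\lambda=1\). Suppose first that \(g\) is constant. Then \(V_g=0=T_0\); this is the case \(c=0\) of the form \(g(z)=a+cz^2\), with symbol \(f=0\). From now on assume \(g'\not\equiv0\). By Theorem~\ref{thm:volterra-main}, \(V_g\) is Toeplitz with a bounded symbol if and only if \(\sigma_{1,g}\in L^\infty(\C,dA)\). Since \(\lambda=1\), the exponential factor in \eqref{eq:centered-volterra-candidate-alpha} equals \(1\), and
\[
\sigma_{1,g}(w)=\frac{g'(w)}{\alpha\bar w},\qquad w\ne0.
\]
The automatic conditions \(\Re\lambda\ge|\lambda|^2>0\) hold trivially for \(\lambda=1\). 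Thus everything reduces to deciding when \(g'(w)/\bar w\) is essentially bounded.

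For necessity, note that \(g'(w)/\bar w\) is continuous on \(\C\setminus\{0\}\), so essential boundedness gives \(|g'(w)|\le C|w|\) for all \(w\ne0\). By the automatic condition \eqref{eq:volterra-automatic-conditions-alpha} (or directly from the bound) \(g'(0)=0\), so \(g'(w)=wS(w)\) with \(S\) entire. The estimate gives \(|S(w)|\le C\) for \(w\ne0\), hence everywhere by continuity. Liouville's theorem then makes \(S\) a constant \(2c\), so \(g'(z)=2cz\) and \(g(z)=a+cz^2\). The only step that needs any care is this passage from an a.e.\ bound to a pointwise bound, and continuity away from the origin handles it.

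For sufficiency, take \(g(z)=a+cz^2\) with \(c\ne0\). Then \(\sigma_{1,g}(w)=2cw/(\alpha\bar w)\), which has modulus \(2|c|/\alpha\) and so is bounded. Theorem~\ref{thm:volterra-main} gives \(V_g=T_{\sigma_{1,g}}\), with the bounded symbol unique a.e., and this symbol is exactly the stated \(f(z)=\frac{2c}{\alpha}\frac z{\bar z}\).
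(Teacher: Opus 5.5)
Your proof is correct and follows the paper's route. The paper's proof is the one-line instruction to apply Theorem~\ref{thm:volterra-main} with \(\lambda=1\), and your Liouville argument, from \(|g'(w)|\le C|w|\) to \(g'(z)=2cz\), supplies the step it leaves implicit; the paper's additional citation of Constantin's boundedness theorem is not needed for your argument, since Theorem~\ref{thm:volterra-main} already settles boundedness.
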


\begin{proof}
	Apply Theorem~\ref{thm:volterra-main} with $\lambda=1$.  The boundedness
	characterization follows from \cite{Constantin} at $\alpha=1$ and
	Lemma~\ref{lem:dilation}. 
\end{proof}
\begin{remark}
	For \(\alpha=1\), \cite[Proposition~3.23]{BFRR} gives the nontrivial
	example corresponding to \(g(z)=z^2/2\).  Corollary~\ref{cor:classical-volterra}
	shows that, up to multiplication by a constant and addition of a constant to
	\(g\), this example exhausts all classical Volterra operators which are
	Toeplitz operators with bounded measurable symbols.
\end{remark}
	
	\subsection*{Conflict of interest}
	The authors have no conflict of interest to declare that are relevant to the content
	of this article.
	
	\subsection*{Data availability statement}
	No data, models, or code were generated or used for the research described in the
	article.
	
	\bibliographystyle{amsplain}
	\bibliography{references}
	
	\medskip
	
	\noindent
	School of Mathematical Sciences, Dalian University of Technology,
	Dalian, Liaoning 116024, P. R. China
	
	\noindent
	Email address: \texttt{zhaopeng.lin@mail.dlut.edu.cn}
	
	\medskip
	
	\noindent
	School of Mathematical Sciences, Dalian University of Technology,
	Dalian, Liaoning 116024, P. R. China
	
	\noindent
	Email address: \texttt{lyfdlut@dlut.edu.cn}
	
	\medskip
	
	\noindent
	School of Mathematical Sciences, Dalian University of Technology,
	Dalian, Liaoning 116024, P. R. China
	
	\noindent
	Email address: \texttt{zuchao@dlut.edu.cn}
	
\end{document}